\documentclass[11pt]{amsart}
\usepackage{amsrefs,amsmath,array,enumerate,tikz,bm,graphicx,comment}

\newcommand{\A}{\mathbb{A}}
\newcommand{\PP}{\mathbb{P}}
\newcommand{\Q}{\mathbb{Q}}
\newcommand{\Z}{\mathbb{Z}}

\newcommand{\Cn}{\langle C_n\rangle}
\newcommand{\Qbar}{\overline{\mathbb{Q}}}

\newcommand{\calB}{\mathcal{B}}
\newcommand{\calE}{\mathcal{E}}
\newcommand{\calM}{\mathcal{M}}
\newcommand{\calS}{\mathcal{S}}

\newcommand{\Aut}{\operatorname{Aut}}

\newcommand{\Per}{\operatorname{Per}}
\newcommand{\PGL}{\operatorname{PGL}}
\newcommand{\PrePer}{\operatorname{PrePer}}
\newcommand{\Rat}{\operatorname{Rat}}
\newcommand{\Res}{\operatorname{Res}}
\newcommand{\Spec}{\operatorname{Spec}}

\newtheorem{thm}{Theorem}[section]
\newtheorem{lem}[thm]{Lemma}
\newtheorem{prop}[thm]{Proposition}

\theoremstyle{definition}
\newtheorem{rem}[thm]{Remark}

\numberwithin{equation}{section}

\title[Portraits of quadratic rational maps]{Portraits of quadratic rational maps with a small critical cycle}

\author[T. Dunaisky]{Tyler Dunaisky}
\address{Department of Mathematics, Purdue University, Indiana, USA}
\email{tdunaisk@purdue.edu}

\author[D. Krumm]{David Krumm}
\address{School of Mathematics, University of Costa Rica, San Jos\'e, Costa Rica}
\email{david.krumm@ucr.ac.cr}
\urladdr{http://maths.dk}

\keywords{Periodic point, critical point, dynamical modular curve}
\subjclass{Primary 37P05; Secondary 11G30}

\begin{document}
\maketitle

\begin{abstract}
Motivated by a uniform boundedness conjecture of Morton and Silverman, we study the graphs of pre-periodic points for maps in three families of dynamical systems, namely the collections of rational functions of degree two having a periodic critical point of period $n$, where $n\in\{2,3,4\}$. In particular, we provide a conjecturally complete list of possible graphs of rational pre-periodic points in the case $n=4$, analogous to well-known work of Poonen for $n=1$, and we strengthen earlier results of Canci and Vishkautsan for $n\in\{2,3\}$. In addition, we address the problem of determining the representability of a given graph in our list by infinitely many distinct linear conjugacy classes of maps.
\end{abstract}

\section{Introduction}\label{intro_section}
Let $k$ be a field of characteristic $0$ with algebraic closure $\bar k$. For every nonconstant rational function $\phi\in k(x)$, the \textbf{degree} of $\phi$ is the integer $\deg(\phi)=\max\{\deg p,\deg q\}$, where $p,q$ are coprime polynomials such that $\phi=p/q$. We will write $\Rat_d(k)$ for the set of all rational functions of a fixed degree $d\ge 1$ having coefficients in $k$. Let $\PP^1:=\PP^1_k$ be the projective line over $k$. Corresponding to the rational map $\phi$ there is a morphism of algebraic varieties $\PP^1\to\PP^1$, which we also denote by $\phi$; we may thus regard rational functions as discrete dynamical systems on $\PP^1$. A \textbf{critical point} of $\phi$ is a point of $\PP^1$ where the corresponding morphism is ramified.

Writing $\phi^i$ for the $i$th iterate of $\phi$, a point $P\in\PP^1(\bar k)$ is called \textbf{periodic} for $\phi$ if $\phi^n(P)=P$ for some positive integer $n$; in that case, the least such $n$ is the \textbf{period} of $P$, and $P$ is called an $\bm{n}$\textbf{-periodic point} of $\phi$. More generally, $P$ is \textbf{pre-periodic} for $\phi$ if there exists $m\ge 0$ such that $\phi^m(P)$ is periodic. In that case, the \textbf{pre-periodic type} of $P$ is the pair $(m,n)$, where $n$ is the eventual period of $P$ and $m$ is the least positive integer such that $\phi^m(P)$ is $n$-periodic.

For any field extension $K/k$ with $K\subseteq\bar k$, let $\PrePer(\phi,K)$ denote the set of all $K$-rational points of $\PP^1$ that are pre-periodic for $\phi$. This set admits a natural structure of directed graph in which vertices correspond to pre-periodic points and edges have the form $P\to\phi(P)$ for $P\in\PrePer(\phi,K)$. The isomorphism class of the digraph $\PrePer(\phi,K)$ will be called the \textbf{portrait} of $\phi$ over $K$, and denoted $G(\phi,K)$.

Two rational functions $\phi, \psi\in k(x)$ are called \textbf{linearly conjugate} over the field $K$ if there exists $\sigma\in\Rat_1(K)$ such that $\psi=\sigma^{-1}\circ\phi\circ\sigma$; in that case $\phi$ and $\psi$ are considered to be equivalent as dynamical systems on $\PP^1_K$, and in particular they have the same portrait over $K$. The linear conjugacy class of $\phi$ over $\bar k$ will be denoted $[\phi]$ and called the \textbf{geometric dynamical system} of $\phi$. The conjugacy class of $\phi$ over $k$ will be denoted $\langle\phi\rangle$ and called the \textbf{arithmetic dynamical system} of $\phi$. Finally, the \textbf{automorphism group} of $\phi$, denoted $\Aut(\phi)$, consists of all functions $\sigma\in\Rat_1(\bar k)$ satisfying $\sigma^{-1}\circ\phi\circ\sigma=\phi$. 

\subsection{Families of quadratic dynamical systems} The present article is motivated by a wide-ranging conjecture of Morton and Silverman \cite{morton-silverman} in arithmetic dynamics, one consequence of which is that, for every degree $d\ge 2$, the set of portraits $\{G(\phi,\Q):\phi\in\Rat_d(\Q)\}$ is finite. This weaker statement has not been proved, even for $d=2$, but some progress has been made by restricting attention to one-parameter families of maps. Notably, the combined results of several articles \cites{poonen,morton_period4,flynn-poonen-schaefer,stoll,hutz-ingram} provide compelling evidence that there exist exactly 12 portraits of the form $G(x^2+c,\Q)$ with $c\in\Q$, namely those listed by Poonen in \cite{poonen}*{Figure 1}. Our main contribution in this paper is to provide similar lists of portraits for other one-parameter families of rational maps of degree two, namely the collections of maps $\phi\in\Rat_2(\Q)$ having an $n$-periodic critical point, where $n\in\{2,3,4\}$.

As explained in \textsection\ref{M2_section}, Milnor \cite{milnor} showed that the set of all degree-2 dynamical systems on $\PP^1$ is in bijection with the affine plane $\A^2$. Moreover, under this correspondence, dynamical systems having an $n$-periodic critical point correspond to points on an algebraic curve in $\A^2$, which Milnor denotes by $\Per_n(0)$ and we will denote by $C_n$ in order to ease notation. When convenient, we will identify points of $C_n$ with the corresponding dynamical systems on $\PP^1$. The curve $C_n$ is known to be rational for $n\le 4$, so its set of rational points is infinite and parametrizable; in contrast, Ramadas and Silversmith \cite{ramadas-silversmith}*{Corollary 1.4} show that $C_5$ has no rational point. Parametrizations of $C_n$ for $n\in\{1,2,3\}$ are provided by Milnor (see Lemma 3.4 and Example D.3 in \cite{milnor}); a parametrization of $C_4$ is derived here in \textsection \ref{arithmetic_parametrization} as an application of a method for computing equations for the curves $C_n$.

\subsection{Main results} This paper concerns the portraits of dynamical systems in $C_n$ for $n\in\{2,3,4\}$. Work in this direction was initiated by Poonen's article \cite{poonen}, which includes a complete list of portraits $G(\phi,\Q)$ for maps $\phi\in C_1(\Q)$, under the assumption that no such map $\phi$ has a rational periodic point of period greater than three. Analogous results for $n=2$ and $3$ were obtained by Canci and Vishkautsan \cites{canci-vishkautsan,vishkautsan}. Theorem \ref{main_thm} below is a natural continuation of this progression of results, handling the case $n=4$.

In stating our main theorem we use the terminology and results established by Silverman in \cite{silverman_FOD}; in particular, every nonconstant rational function $\phi\in\bar k(x)$ has a corresponding \textbf{field of moduli}, denoted by $k_{\phi}$, which has the property that $k_{\phi}=k_{\rho}$ if $\rho\in[\phi]$. Moreover, if $\phi$ has even degree, the field $k_{\phi}$ can be described as the smallest intermediate field $K$ in the extension $\bar k/k$ such that $\phi\in[\psi]$ for some rational function $\psi\in K(x)$.

\begin{thm}\label{main_thm}
Suppose that $\phi\in\Qbar(x)$ is a rational function of degree two having a $4$-periodic critical point in $\PP^1(\Qbar)$.
\begin{enumerate}[(a)]
    \item\label{main_thm_nontrivial} If $\Aut(\phi)$ is nontrivial, then $\Q_{\phi}$ is a number field of absolute degree two or six; in particular, $\phi$ cannot have rational coefficients.
    \item\label{main_thm_trivial} If $\phi\in\Q(x)$, then $\phi$ has a unique $4$-periodic critical point, and this point is defined over $\Q$. Moreover, assuming that $\phi$ has no rational periodic point of period greater than four, and assuming known all rational points on three algebraic curves --- two of genus five and one of genus six --- the portrait $G(\phi,\Q)$ must be one of the five shown in Figure \ref{period4portraits_trivial}.
\end{enumerate} 
\end{thm}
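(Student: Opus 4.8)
The plan is to reduce both parts to the explicit parametrization of $C_4$ obtained in \textsection\ref{arithmetic_parametrization}, together with a systematic search for rational points on the associated dynamical modular curves. Throughout I identify a point of $C_4\cong\PP^1$ with its parameter $t$ and with the corresponding geometric system $[\phi_t]$. Since $C_4\subset\mathcal{M}_2=\A^2$ is defined over $\Q$, the field of moduli $\Q_{\phi_t}$ is exactly the residue field $\Q(t)$ of that point; and because the degree is even, the condition $\Q_{\phi_t}=\Q$ guarantees a model defined over $\Q$.

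For part (a), a degree-two map has nontrivial automorphism group precisely when its class lies on the symmetry locus $\calS\subset\mathcal{M}_2$, an explicit curve in Milnor's coordinates. I would substitute the parametrization of $C_4$ into a defining equation for $\calS$, obtaining a single polynomial in $t$ whose roots are the parameters of the maps in $C_4$ carrying extra symmetry. Factoring this polynomial over $\Q$, I expect irreducible factors of degrees two and six and no linear factor. As $\Q_{\phi_t}=\Q(t)$, the residue field at each such $t$ is then a number field of absolute degree two or six, which is the stated constraint; and since any map with rational coefficients has field of moduli $\Q$, the absence of a rational root shows that such a $\phi$ cannot be defined over $\Q$.

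For part (b), I first observe that by part (a) any $\phi\in\Q(x)$ lying in $C_4$ has trivial automorphism group, so its two critical points are dynamically distinguishable. To obtain uniqueness I would impose on the parametrization the condition that the second critical point is also $4$-periodic — a finite union of cases according to whether it falls in the same $4$-cycle or a disjoint one — verify that the resulting values of $t$ are irrational, and conclude that only the distinguished critical point is $4$-periodic. Rationality of that point is then automatic: the Galois action permutes the two critical points but must fix the unique $4$-periodic one, so it lies in $\PP^1(\Q)$. The critical $4$-cycle already forces a fixed sub-portrait, namely the cycle $c\to\phi(c)\to\phi^2(c)\to\phi^3(c)\to c$ in which $\phi(c)$ has the critical point as its sole preimage, while each of the three remaining cycle points acquires a second preimage that is necessarily $\Q$-rational, since one root of a $\Q$-rational quadratic being rational forces the other to be.

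The enumeration then proceeds feature by feature. Each way of enlarging this core — attaching further rational preimages to the three pendant points, adjoining a rational cycle of period $1$, $2$, or $3$, adjoining a second rational $4$-cycle, or prescribing the orbit of the non-periodic critical point — translates into a system of equations over the $t$-line and hence into a dynamical modular curve $X\to C_4$. For each such curve I would compute the genus and then determine $X(\Q)$: the curves of genus $0$ or $1$ can be handled directly, and compatibility of the surviving features, under the hypothesis that there is no rational periodic point of period exceeding four, assembles the admissible graphs into the five portraits of Figure \ref{period4portraits_trivial}. The principal obstacle is the rational-point computation on the three high-genus curves that arise — two of genus five and one of genus six — which resist unconditional treatment and whose rational points we therefore take as given; a secondary difficulty is guaranteeing exhaustiveness, that is, that every compatible combination of features has been matched to a modular curve and no portrait is overlooked.
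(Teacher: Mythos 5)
Your proposal is correct and follows essentially the same route as the paper: part (a) by intersecting $C_4$ with the symmetry locus $\calS$ and computing the fields of moduli of the finitely many intersection points, and part (b) by passing to a normal form with critical $4$-cycle, noting that this cycle forces the sub-portrait I1 (with the same rational-preimage argument), and controlling all possible enlargements via dynamical modular curves (dynatomic, preimage, and fiber-product curves) handled by Chabauty methods, with the two genus-five curves and one genus-six curve assumed known. The exhaustiveness step you flag as a difficulty is exactly where the paper's Proposition \ref{period4_finiteness} does its work, and it follows quickly from the curve computations you list: since $A(c)$ and $Q(c)$ have no rational preimages and $1$ has none besides $0$, every additional rational point in the component of the critical cycle must funnel through $\infty$, so the assumed emptiness of $Y(2,\infty)(\Q)$ (together with, for the $2$-cycle component, the $(2,2)$-curve and the fiber product $Y_1(2)\times_{\A^1}Y(1,\infty)$) truncates all tails at depth one and yields precisely the five portraits.
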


Empirical data gathered here (see Proposition \ref{portrait_search} and Remark \ref{period4_rem}) suggests that no map $\phi\in\Rat_2(\Q)$ which has a $4$-periodic critical point can have a rational periodic point of period greater than four. Assuming this is indeed the case, and assuming we have found all rational points on the three curves mentioned in Theorem \ref{main_thm}\eqref{main_thm_trivial}, the theorem implies that the five portraits in Figure \ref{period4portraits_trivial} represent all possible portraits $G(\phi,\Q)$ for such maps $\phi$. Thus, since the curve $C_4$ has infinitely many rational points, at least one of these portraits must correspond to infinitely many distinct arithmetic dynamical systems $\langle\phi\rangle$. Our next result identifies precisely which of the five portraits arise infinitely often.

\begin{thm}\label{infinitely_represented}
   The portraits labelled {\rm F1} and {\rm F2} in Figure \ref{period4portraits_trivial} occur as $G(\phi,\Q)$ for only finitely many distinct arithmetic dynamical systems $\langle\phi\rangle$ of maps $\phi\in\Rat_2(\Q)$ such that $[\phi]\in C_4$. Moreover, assuming that no such map $\phi$ has a rational periodic point of period greater than four, and assuming known all rational points on the curves referenced in Theorem \ref{main_thm}\eqref{main_thm_trivial}, the portraits {\rm I1}, {\rm I2}, and {\rm I3} arise as $G(\phi,\Q)$ for infinitely many distinct arithmetic dynamical systems $\langle\phi\rangle$ with $\phi$ as above.
\end{thm}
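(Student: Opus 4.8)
The plan is to convert the counting of arithmetic dynamical systems into a counting of rational points, first on $C_4$ itself and then on certain auxiliary curves covering it, and to separate the five portraits according to the genus of those covers. The first step is to record a clean bijection. A rational point of $C_4\subset M_2=\A^2$ corresponds to a Galois-stable geometric class $[\phi]$, so its field of moduli is $\Q$; by Theorem \ref{main_thm}\eqref{main_thm_nontrivial} this forces $\Aut(\phi)$ to be trivial, since a nontrivial automorphism group would make the field of moduli a number field of degree two or six. As $\phi$ has even degree, its field of moduli is a field of definition, so $[\phi]$ contains a map with rational coefficients, and as $\Aut(\phi)$ is trivial this $\Q$-model is unique up to conjugacy over $\Q$. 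Hence each point of $C_4(\Q)$ determines exactly one arithmetic system $\langle\phi\rangle$ with $\phi\in\Rat_2(\Q)$ and $[\phi]\in C_4$, and conversely, with the portrait $G(\phi,\Q)$ an invariant of the point. It therefore suffices to count the rational points of $C_4$ realizing each portrait, which I would do using the parametrization of $C_4$ from \textsection\ref{arithmetic_parametrization}: this gives a birational map $\PP^1\dashrightarrow C_4$, so infinitely many parameter values $t\in\Q$ correspond to infinitely many distinct $\langle\phi\rangle$.

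Next I would treat the finiteness of F1 and F2. Each of these portraits records the rationality of pre-periodic points which, for the generic member of the family, are defined only over a proper algebraic extension of $\Q(t)$. The set of specializations $t\in\Q$ at which such a point becomes rational is precisely the image of the rational points of an associated dynamical modular curve $X\to C_4$. I would write down explicit models for the curves $X_{F1}$ and $X_{F2}$ attached to the extra structure of F1 and F2, and compute their genera, either via Riemann--Hurwitz applied to the covering $X\to C_4$ or directly from the equations. The claim to verify is that both genera are at least two; granting this, Faltings' theorem shows $X_{F1}(\Q)$ and $X_{F2}(\Q)$ are finite, so only finitely many parameters, and hence only finitely many arithmetic systems, realize F1 or F2. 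This part of the argument is unconditional.

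For the infinitude of I1, I2, and I3, the point is that the relevant conditions are governed by genus-zero auxiliary curves rather than by proper subvarieties of $C_4$, so they hold on infinite but thin sets of parameters. Concretely, the rationality of the extra pre-periodic point distinguishing I2 (resp.\ I3) from the base portrait I1 is controlled by a single square condition of the form $u^2=D(t)$ with $\deg D\le 2$, and I would exhibit a rational parametrization of the resulting conic $X_{I2}$ (resp.\ $X_{I3}$), producing a one-parameter subfamily $t=t(s)$ along which that point is rational for every $s\in\Q$. Since $t(s)$ is nonconstant and $t\mapsto[\phi_t]$ is generically injective into $M_2$, this yields infinitely many distinct $\langle\phi\rangle$ whose portrait contains the I2 (resp.\ I3) structure. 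For I1 I would instead pass to the complement: the parameters realizing any strictly larger portrait form a finite union of such thin sets together with the finite F-loci, and the complementary set of $t\in\Q$ remains infinite.

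The remaining and most delicate point is to upgrade ``portrait contains $\mathrm{I}j$'' to ``portrait equals $\mathrm{I}j$'' for infinitely many members of each family, and it is here that the conditional hypotheses enter. Assuming no map in question has a rational periodic point of period greater than four removes all larger cyclic structure, while assuming the rational points on the two genus-five and one genus-six curves of Theorem \ref{main_thm}\eqref{main_thm_trivial} are known rules out the remaining possibilities for extra rational pre-periodic points. Pulling these finitely many obstructions back along each genus-zero subfamily $t=t(s)$, I expect them to cut out only finitely many exceptional values of $s$, namely images of higher-genus curves or finite point sets, so that the portrait is exactly $\mathrm{I}j$ for all but finitely many $s$. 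The main obstacle I anticipate is precisely this step: showing that the pulled-back obstruction loci are genuinely proper, equivalently that no unwanted rational pre-periodic point is forced to appear identically along an entire subfamily. Establishing this requires a careful analysis of each auxiliary curve and of how it interacts with the parametrization of $C_4$, and it is the part of the argument where the conjectural inputs are unavoidable.
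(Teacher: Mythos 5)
Your setup (identifying arithmetic classes $\langle\phi\rangle$ with rational points of $C_4$ via triviality of $\Aut$ and field of moduli $=$ field of definition) and your treatment of F1 and F2 are sound and essentially what the paper does: the paper pins F1 down to the single value $c=1/6$ by Chabauty on the genus-2 curve $Y_1(1)$, and gets finiteness for F2 from Faltings applied to the genus-5 fiber product $F=Y_1(2)\times_{\A^1}Y(1,\infty)$, both unconditionally.

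The genuine gap is in your mechanism for I2 and I3. You assert that the extra rational structure distinguishing I2 (a rational preimage of $\infty$) resp.\ I3 (a rational 2-cycle) is governed by a square condition $u^2=D(t)$ with $\deg D\le 2$, i.e.\ by a conic admitting a rational parametrization. That is false. For the family \eqref{C4_parametrized}, a rational preimage of $\infty$ exists iff the quadratic denominator of $\phi_c$ has a rational root, i.e.\ iff $c^2(c^4+2c^3-5c^2-2c+5)$ is a square; the quartic factor is squarefree, so the curve $Y(1,\infty)$ has genus one, and the paper shows it is birational to an elliptic curve of Mordell--Weil rank $1$; the same happens for $Y_1(2)$, which controls the 2-cycle. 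No rational parametrization of either curve exists, so there is no one-parameter subfamily $t=t(s)$ to work along; the infinitude of good parameters is an irreducibly arithmetic fact --- positivity of two Mordell--Weil ranks, which must be computed --- and had either rank been $0$, the conclusion of the theorem would be false for the corresponding portrait, so no genus/parametrization argument can substitute for this computation. Relatedly, the step you defer as the ``main obstacle'' (upgrading ``contains I2 or I3'' to ``equals,'' off a finite set) is exactly where the paper reuses $F$: the projections $A$ and $B$ to the $c$-line of $Y(1,\infty)(\Q)$ and $Y_1(2)(\Q)$ are infinite thin sets whose intersection is finite because $F(\Q)$ is finite by Faltings, and then the conditional classification (Proposition \ref{period4_finiteness}) yields portrait I1 exactly on $\Q\setminus(A\cup B)$, I2 exactly on $A\setminus B$, and I3 exactly on $B\setminus A$, after discarding finitely many exceptional $c$. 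So your outline cannot be completed as written: the conic parametrizations must be replaced by rank computations for the two elliptic curves, together with the fiber-product argument that separates the two conditions.
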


Theorem \ref{cv_extensions} below summarizes our results for maps of degree two having a periodic critical point of period two or three. This theorem serves as a supplement to the work of Canci and Vishkautsan \cites{canci-vishkautsan,vishkautsan}, and extends some of the results of these authors by taking into consideration all periodic critical points regardless of their fields of definition.

\begin{thm}\label{cv_extensions} Let $n\in\{2,3\}$, and suppose that $\phi\in\Qbar(x)$ is a rational function of degree two having an $n$-periodic critical point in $\PP^1(\Qbar)$.
\begin{enumerate}[(a)]
    \item\label{3_nontrivial} If $n=3$ and $\Aut(\phi)$ is nontrivial, then $\Q_{\phi}$ is a number field of absolute degree four; in particular, $\phi$ cannot have rational coefficients.
    \item\label{3_trivial} If $n=3$ and $\phi\in\Q(x)$, then $\phi$ has a $3$-periodic critical point defined over $\Q$. Hence, under the assumptions of Theorem 1 in \cite{vishkautsan}, the portrait $G(\phi,\Q)$ must be one of the six referred to in the theorem.
    \item\label{2_trivial} Suppose $n=2$, $\phi\in\Q(x)$, and $\Aut(\phi)$ is trivial. Then $\phi$ has a unique $2$-periodic critical point, and this point is defined over $\Q$. In particular, under the assumptions of Theorem 1.2 in \cite{canci-vishkautsan}, the portrait $G(\phi,\Q)$ must be one of the $13$ referred to in the theorem.
    \item\label{2_nontrivial} Suppose $n=2$, $\phi\in\Q(x)$, and $\Aut(\phi)$ is nontrivial. Then $\phi$ has exactly two $2$-periodic critical points, and these points generate a number field $K$ of absolute degree at most two. Moreover, the portrait $G(\phi,K)$ must be one of the four shown in Figure \ref{period2portraits}.
\end{enumerate}
\end{thm}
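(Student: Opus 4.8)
The plan is to run all four parts through one mechanism: the interplay between a nontrivial geometric automorphism $\iota\in\Aut(\phi)$ and the action of $\Gal(\Qbar/\Q)$ on the two critical points of $\phi$, made explicit through Milnor's parametrization of the rational curve $C_n$. I would begin with the structural observation that drives everything. A degree-two map has exactly two critical points, and any $\iota\in\Aut(\phi)$ commutes with $\phi$, hence permutes this pair and carries $m$-periodic points to $m$-periodic points. If $\Aut(\phi)$ is nontrivial, then both critical points are $n$-periodic: a nontrivial $\iota$ is a M\"obius map fixing at most two points of $\PP^1(\Qbar)$, so were it to fix an $n$-periodic critical point $c$ it would fix the entire orbit $c,\phi(c),\dots,\phi^{n-1}(c)$ pointwise, which is impossible for $n=3$ (three fixed points) and for $n=2$ forces the other critical point to equal $\phi(c)$. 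In all cases the second critical point is again $n$-periodic. The converse I need --- that away from the nontrivial-automorphism locus exactly one critical point is $n$-periodic --- I would extract directly from the parametrization of $C_n$, and this is the one place the argument requires a little care.

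For part \eqref{3_nontrivial} I would normalize $\iota$ to $z\mapsto -z$, so that $\phi$ is odd and, after scaling, equals $\phi_k(z)=k(z+z^{-1})$ with critical points $\pm1$. Demanding that $1$ be $3$-periodic but not fixed, and clearing the degenerate factor and the period-one factor, reduces to the explicit quartic $q(k)=8k^4+4k^3+6k^2-k+2\in\Q[k]$. The three fixed points of $\phi_k$ have multipliers $1/k$ (at $\infty$) and $2k-1$ (with multiplicity two), so the symmetric-function coordinates $\sigma_1,\sigma_2$ that coordinatize $M_2\cong\A^2$ become explicit rational functions of $k$; eliminating $k$ identifies the field of moduli $\Q_\phi=\Q(\sigma_1,\sigma_2)$ with the quartic field $\Q(k)$ defined by $q$. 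Checking that $q$ is irreducible over $\Q$ then yields $[\Q_\phi:\Q]=4$; in particular $\Q_\phi\neq\Q$, so $\phi$ admits no model over $\Q$.

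Parts \eqref{3_trivial} and \eqref{2_trivial} follow rapidly. When $\phi\in\Q(x)$, part \eqref{3_nontrivial} forces $\Aut(\phi)$ trivial for $n=3$ (a quartic field of moduli cannot be $\Q$), and the structural fact then supplies a unique $n$-periodic critical point $c$ in both cases. Because $\phi$ has rational coefficients, $\Gal(\Qbar/\Q)$ commutes with $\phi$ and permutes the critical points preserving periods; since $c$ is the only $n$-periodic one, it is Galois-stable, i.e.\ $c\in\PP^1(\Q)$. A rational $n$-periodic critical point places $\phi$ squarely under the hypotheses of \cite{vishkautsan} (for $n=3$) and \cite{canci-vishkautsan} (for $n=2$), whose portrait lists of six and thirteen then transfer verbatim.

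For part \eqref{2_nontrivial} the two critical points are the roots of the degree-two Wronskian $p'q-pq'$ of $\phi=p/q$, a polynomial with coefficients in $\Q$; hence $K=\Q(c_1,c_2)$ is $\Q$ or a quadratic field, and over $K$ the involution $\iota$ is defined. Here both critical points are $2$-periodic, so $\phi$ is a $\Q$-rational twist of a map on $C_2$ carrying a nontrivial automorphism (the superattracting class of $z\mapsto z^{-2}$); sweeping through these twists and deciding which pre-periodic points of the common geometric portrait are $K$-rational produces the four diagrams of Figure \ref{period2portraits}. I expect two genuine obstacles. In part \eqref{3_nontrivial} one must confirm that the multiplier coordinates generate all of $\Q(k)$ and not a proper subfield --- equivalently, that $k\mapsto(\sigma_1,\sigma_2)$ is injective on the four roots of $q$. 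More seriously, the completeness of the four-portrait list in part \eqref{2_nontrivial} is a Diophantine problem of the same nature as the main theorems: bounding the $K$-rational pre-periodic points over all twists amounts to controlling rational points on auxiliary curves, and that is the real crux.
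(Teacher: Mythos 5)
Your part \eqref{3_nontrivial} is correct and takes a genuinely different route from the paper: the paper intersects the explicit plane curves $F_3=0$ and the symmetry-locus cubic \eqref{symlocuseq} and computes the resulting $0$-dimensional scheme by machine (Proposition \ref{period234_autos}), whereas you work in the normal form $\phi_k=k(z+1/z)$ for maps with an automorphism and reduce to a quartic. Your quartic is right: $\phi_k^3(1)=1$ gives $16k^5+8k^3-8k^2+5k-2=(2k-1)(8k^4+4k^3+6k^2-k+2)$, the quartic factor is irreducible (e.g.\ modulo $3$), and your flagged verification that $\Q(\sigma_1,\sigma_2)=\Q(k)$ does succeed, since $k$ can be solved for rationally in terms of $(\sigma_1,\sigma_2)$ away from the point $(-6,12)$, which does not arise from roots of your quartic. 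Part \eqref{2_trivial} is also sound: for $n=2$ the converse you need is true, because two $2$-periodic critical points would have to form the unique $2$-cycle of a quadratic map (the second dynatomic polynomial has degree $2$), forcing $[\phi]=[1/x^2]$, whose automorphism group is nontrivial.

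The first genuine gap is in part \eqref{3_trivial}. The converse you rely on --- trivial $\Aut(\phi)$ implies a \emph{unique} $3$-periodic critical point --- is false, so it cannot be ``extracted from the parametrization'': the map $\phi(x)=(x-1)^2/x^2$ has critical points $0$ and $1$, critical orbit $0\mapsto\infty\mapsto 1\mapsto 0$, field of moduli $\Q$ (hence trivial automorphism group by part \eqref{3_nontrivial}), and therefore \emph{two} $3$-periodic critical points. The paper's Proposition \ref{parametrization3} says exactly this: its maps have ``either one or two'' $3$-periodic critical points. Consequently your Galois-stability argument needs a case analysis you do not supply. When both critical points lie in one $3$-cycle the conclusion can be rescued (a Galois element swapping the two critical points would act on the cycle as a nontrivial rotation and hence send one of them to the third point of the cycle, impossible for the roots of a rational quadratic), but when the two critical points lie in two distinct $3$-cycles nothing prevents Galois from swapping the cycles, and excluding that configuration over $\Q$ requires real input. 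The paper sidesteps the whole issue by parametrizing $C_3(\Q)$ and exhibiting a rational $3$-periodic critical point as an explicit function of the parameter; your outline has no substitute for that step.

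The second gap is part \eqref{2_nontrivial}, where your sketch stops exactly where the content begins, and the route you gesture at (bounding preperiodic points across twists via rational points on auxiliary curves) is both unnecessary and unworkable as a way to obtain an exact list of four portraits. The key fact --- and the paper's key step --- is that over $K=\Q(\alpha)$ the map is conjugate to $\lambda/x^2$ with $\lambda=c\alpha-1$, and the full set of $\Qbar$-preperiodic points of $\lambda/x^2$ is known exactly: it is $\{0,\infty\}\cup\{\zeta\lambda^{1/3}:\zeta\in\mu_\infty\}$, because $\lambda/x^2$ is conjugate to the power map $1/x^2$. So part \eqref{2_nontrivial} is pure algebra about cube roots and roots of unity in a field of degree at most two; no Diophantine finiteness theorem is involved. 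Be warned, however, that this algebra is delicate precisely when $K$ contains extra roots of unity: for $c=1$ one finds $K=\Q(\sqrt{-3})$ and $\lambda=\omega$, a primitive cube root of unity, which is \emph{not} a cube in $K$, and yet $1\mapsto\omega\mapsto\omega^2\mapsto 1$ is a $K$-rational $3$-cycle of $\omega/x^2$; the resulting portrait (a $2$-cycle together with a $3$-cycle with tails) has a cycle structure different from all four portraits of Figure \ref{period2portraits}. So the naive dichotomy ``$\lambda$ is a cube in $K$, or else the only preperiodic points are $0,\infty$'' fails over $\Q(\sqrt{-3})$; this case appears to be overlooked in the paper's own proof of Proposition \ref{period2_autos_portraits} as well, and any complete treatment of part \eqref{2_nontrivial} must address it explicitly.
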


\subsection{Methods} The main construction used in this article is the definition of a collection of dynamical modular curves corresponding to any nonconstant morphism $\phi:\PP^1_k\to\PP^1_k$, where $k=\Q(t)$ is a function field in one variable. A process of reduction applied to $\phi$ yields a one-parameter family of specialized maps $\phi_c:\PP^1_{\Q}\to\PP^1_{\Q}$ with $c\in\Q$, and various arithmetic dynamical properties of these maps can be determined by analyzing the sets of rational points on the modular curves we define. Parts of our construction of dynamical modular curves may be regarded as generalizations of earlier work by Morton \cite{morton_curves}, which applies to polynomial mappings $\phi$.

For the purposes of computing rational points on curves, the main tools used here are the \texttt{Chabauty} function in \textsc{Magma} \cite{magma} and the \texttt{Chabauty} package of  Balakrishnan and Tuitman \cite{balakrishnan-tuitman}. For computing the portraits of rational functions over $\Q$, as well as other dynamical invariants, we use several tools included in \textsc{Sage} \cite{sagemath}, most of them due to Hutz \cites{hutz,hutz_dynatomic,hutz_multipliers}.

For studying the arithmetic dynamics of quadratic maps with a periodic critical point, a key method developed in the paper is an algorithm for computing an equation for any curve $C_n$. The method does not seem to appear elsewhere in the literature, though the details will be clear to the expert. Combined with the machinery of dynamical modular curves, this algorithm allows us to classify the rational portraits of maps in $C_4$.

With a view towards future applications of our methods, we note that a similar process to that carried out for $C_4$ could, in principle, be used to study the rational portraits in other one-parameter families of rational maps. 

\subsection{Outline} This article is organized as follows. In Section \ref{arithmetic_parametrization} we describe a method for computing equations for the curves $C_n$, and we apply this method to derive parametrizations of the sets of arithmetic dynamical systems $\langle\phi\rangle$, where $\phi\in\Rat_2(\Q)$ has a periodic critical point of period $n\in\{2,3,4\}$. Section \ref{period2_3_section} is devoted to the proof of Theorem \ref{cv_extensions}. In Section \ref{dmc_section} we define several types of dynamical modular curves associated to a one-parameter family of maps, and in Section \ref{period4_section} we use the modular curves for the family $C_4$ in order to prove Theorems \ref{main_thm} and \ref{infinitely_represented}. Appendix \ref{realization_appendix} includes a proof of a lemma used to realize every portrait in Figure \ref{period2portraits} in the form $G(\phi,K)$ as in Theorem \ref{cv_extensions}\eqref{2_nontrivial}.

\subsection{Computer code} All computations needed for this article were carried out using \textsc{Magma} V2.26-12 or \textsc{Sage} V10.2. The code used is available in \cite{code}, with files named according to the relevant subsections of the article. 

\section{Parametrization of arithmetic dynamical systems}\label{arithmetic_parametrization}
In this section we classify the maps $\phi\in\Rat_2(\Q)$ having an $n$-periodic critical point, where $n\in\{2,3,4\}$, up to linear conjugacy over $\Q$. Our main results in this direction are Propositions \ref{parametrization2}-\ref{parametrization4}.

\subsection{The moduli space of quadratic dynamical systems}\label{M2_section} Let $k$ be a field of characteristic $0$ with algebraic closure $\bar k$. For every positive integer $d$, set $\Rat_d:=\Rat_d(\bar k)$, $\PGL_2:=\PGL_2(\bar k)$. The group $\PGL_2$, identified with $\Rat_1$, acts on $\Rat_d$ by conjugation, so that the $\PGL_2$-orbit of a map $\phi\in\Rat_d$ is the geometric dynamical system $[\phi]$. As shown by Silverman \cite{silverman_Md}  (see also \cite{silverman_book}*{\textsection 4.4}), there exists an algebraic variety $\calM_d$, defined over $\Q$, whose points correspond to dynamical systems of degree $d$; more precisely, there is a bijection $\calM_d(\bar k)\stackrel{\sim}{\longrightarrow}\Rat_d/\PGL_2$. As will be illustrated here in the case $d=2$, this correspondence can be used to translate some dynamical properties of rational maps into equations defining subvarieties of $\calM_d$. 

Recall that if $P\in\PP^1(\bar k)$ is an $n$-periodic point of $\phi\in\Rat_d$, the \textbf{multiplier} of $\phi$ at $P$ is given by $\lambda_{\phi,P}:=(\phi^n)'(P)$, possibly after conjugating $\phi$ in order to move $P$ away from $\infty$. The $\bm{n}$\textbf{-multiplier spectrum} of $\phi$ is the multiset
\[\Lambda_n(\phi)=\{\lambda_{\phi,P}:P\in\Per_n(\phi)\},\]
where $\Per_n(\phi)$ denotes the set of $n$-periodic points of $\phi$ in $\PP^1(\bar k)$. A straightforward argument shows that $\Lambda_n(\phi)$ is a well-defined invariant of the dynamical system $[\phi]\in\calM_d$; furthermore, multiplier spectra have the following property concerning critical points:
\begin{equation}\label{Lambda_property}
    \phi \text{ has an $n$-periodic critical point} \iff 0\in\Lambda_n(\phi).
\end{equation}

 By using multipliers at fixed points, Milnor \cite{milnor} (see also \cite{silverman_book}*{Theorem 4.56}) showed that the variety $\calM_2$ is isomorphic to $\A^2$ over $\Q$. Explicitly, there is an isomorphism
\begin{equation}\label{M2A2_iso}
\calM_2\stackrel{\sim}{\longrightarrow}\A^2,\quad[\phi]\mapsto(\sigma_1,\sigma_2),
\end{equation}
 where $\sigma_i$ is the $i$th elementary symmetric function of the elements of $\Lambda_1(\phi)$. The pair $(\sigma_1,\sigma_2)$ will be called the \textbf{coordinates} of $[\phi]$. As noted in \cite{silverman_book}*{Remark 4.83}, the field of moduli $k_{\phi}$ is equal to the field $k(\sigma_1,\sigma_2)$ generated by the coordinates of $\phi$. In practice, these coordinates can be computed using more general algorithms due to Hutz \cite{hutz_multipliers} which are included in \textsc{Sage}.

The \textbf{symmetry locus} in $\calM_2$ is the set
 \[\calS=\{[\phi]\in\calM_2:\Aut(\phi)\text{ is nontrivial}\}.\]
Milnor shows that, under the isomorphism \eqref{M2A2_iso}, the set $\calS$ corresponds to a cuspidal cubic curve, which we also denote by $\calS$. Applying the parametrization given by Milnor \cite{milnor}*{Corollary 5.3}, and using $(r,s)$ as coordinates on $\A^2$, the following equation for $\calS$ is obtained in \cite{manes-yasufuku}*{p.\ 261}:
\begin{equation}\label{symlocuseq}
-2r^3-r^2s+r^2+8rs+4s^2-12r-12s+36=0.
\end{equation}
 
For $n\ge 1$, let $C_n$ denote the subset of $\calM_2(\bar k)$ consisting of dynamical systems $[\phi]$ such that $\phi$ has an $n$-periodic critical point in $\PP^1(\bar k)$. Work of Milnor \cite{milnor}*{Corollary D.2} and Silverman \cite{silverman_Md}*{Corollary 5.2} shows that the image of $C_n$ under the isomorphism \eqref{M2A2_iso} is the set of points on an algebraic curve in $\A^2$. When convenient, we will identify $C_n$ with this curve.

\subsection{Equations for the curves $C_n$}\label{Cn_eqn_section}  We now restrict to working over the field $k=\Q$, so that the varieties $\calM_2=\A^2,\calS,$ and $C_n$ are all defined over $\Q$. In this subsection we show how to compute an equation for the curve $C_n$, our main result being Proposition \ref{Cn_equation}. Using this method we recover the parametrizations of $C_1$, $C_2$, and $C_3$ obtained by Milnor \cite{milnor}, and we derive a parametrization of $C_4$.   

Let $K$ be a field of characteristic $0$ and $\phi\in K(x)$ a nonconstant rational function. For every positive integer $n$, the $n$th \textbf{dynatomic polynomial} of $\phi$, denoted $\Phi_{n,\phi}$, is defined by
\[\Phi_{n,\phi}:=\prod_{d|n}(x\cdot q_d-p_d)^{\mu(n/d)},\]
where $\mu$ is the classical M{\"o}bius function and $p_d,q_d\in K[x]$ are coprime polynomials such that $\phi^d=p_d/q_d$. The rational function thus defined is in fact a polynomial (see \cite{silverman_book}*{\textsection 4.1}) and has the following property: identifying $\PP^1(\bar K)$ with $\bar K\cup\{\infty\}$, every $n$-periodic point of $\phi$ in $\bar K$ is a root of $\Phi_{n,\phi}$, and conversely, every \textit{simple} root of $\Phi_{n,\phi}$ is $n$-periodic for $\phi$ (see \cite{hutz_dynatomic}*{\textsection 3}).

\begin{lem}\label{Un_lem}
 With $K$ and $\phi$ as above, write $\phi'=p/q$ with $p,q\in K[x]$ coprime polynomials. Suppose that $\Phi_{n,\phi}$ has nonzero discriminant and that $\infty$ is not $n$-periodic for $\phi$. Let $\ell\in K$ be the leading coefficient of $\Phi_{n,\phi}$, and define $U_n(\phi)\in K$ by
\[ U_n(\phi):=\frac{\Res(\Phi_{n,\phi},p)}{\Res(\Phi_{n,\phi},q)}\cdot \ell^{\deg(q)-\deg(p)}.\]
Then the following hold:
\begin{enumerate}[(a)]
    \item\label{multiplier_product} $U_n(\phi)$ is the product of the multipliers of all the $n$-cycles of $\phi$.
    \item\label{Cn_criterion} $U_n(\phi)=0$ if and only if $\phi$ has an $n$-periodic critical point in $\PP^1(\bar K)$.
\end{enumerate}
\end{lem}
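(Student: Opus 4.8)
The plan is to evaluate $U_n(\phi)$ directly in terms of the $n$-periodic points of $\phi$ and the derivative $\phi'$, using the expression of a resultant as a product over roots. First I would record the consequences of the two hypotheses. Since $\Phi_{n,\phi}$ has nonzero discriminant, all of its roots are simple, so by the root characterization recalled above its roots in $\bar K$ are exactly the $n$-periodic points of $\phi$; write these as $P_1,\dots,P_N$, where $N=\deg\Phi_{n,\phi}$. Because $\infty$ is not $n$-periodic, no $P_j$ equals $\infty$; moreover each image $\phi(P_j)$ is again $n$-periodic, hence also finite, so no $P_j$ is a pole of $\phi$. Writing $\phi=a/b$ in lowest terms, the denominator $q$ of $\phi'=p/q$ divides $b^2$, so any root of $q$ is a root of $b$, i.e.\ a pole of $\phi$; since no $P_j$ is a pole, we get $q(P_j)\neq 0$ for every $j$, and in particular $\phi'(P_j)=p(P_j)/q(P_j)$ is finite and well defined.

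For part \eqref{multiplier_product} I would combine the chain rule with the resultant-as-product formula. Partitioning the $P_j$ into $n$-cycles and applying the chain rule, the multiplier of each cycle $C$ is $\prod_{P\in C}\phi'(P)$, so regrouping the factors shows that the product of the multipliers over all $n$-cycles equals the single product $\prod_{j=1}^N \phi'(P_j)$. On the other hand, with $\ell$ the leading coefficient of $\Phi_{n,\phi}$, the standard identity $\Res(\Phi_{n,\phi},g)=\ell^{\deg g}\prod_{j=1}^N g(P_j)$ applied to $g=p$ and $g=q$ gives
\[
\frac{\Res(\Phi_{n,\phi},p)}{\Res(\Phi_{n,\phi},q)}=\ell^{\deg p-\deg q}\prod_{j=1}^N\frac{p(P_j)}{q(P_j)}.
\]
Multiplying by the correction factor $\ell^{\deg q-\deg p}$ cancels the powers of $\ell$ and leaves $U_n(\phi)=\prod_{j=1}^N \phi'(P_j)$, which is precisely the product of the cycle multipliers computed above.

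For part \eqref{Cn_criterion} I would check that the denominator is nonzero and then isolate when the numerator vanishes. Since $q(P_j)\neq 0$ for all $j$, we have $\Res(\Phi_{n,\phi},q)=\ell^{\deg q}\prod_j q(P_j)\neq 0$, so $U_n(\phi)=0$ if and only if $\Res(\Phi_{n,\phi},p)=0$, i.e.\ if and only if $\Phi_{n,\phi}$ and $p$ share a root in $\bar K$ --- equivalently, some $n$-periodic point $P_j$ satisfies $p(P_j)=0$. It remains to identify $p(P_j)=0$ with $P_j$ being critical. Because $q(P_j)\neq 0$, the condition $p(P_j)=0$ is equivalent to $\phi'(P_j)=0$; and since $P_j$ and its image $\phi(P_j)$ are both finite, vanishing of the affine derivative $\phi'(P_j)$ is exactly the condition that $\phi$ be ramified at $P_j$. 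Hence $U_n(\phi)=0$ if and only if $\phi$ has an $n$-periodic critical point in $\PP^1(\bar K)$.

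I expect the only delicate point to be the bookkeeping at infinity: one must use the hypothesis that $\infty$ is not $n$-periodic to guarantee that every $n$-periodic point and its image are finite, so that on the one hand the affine derivative genuinely detects ramification at each such point, and on the other hand the denominator $\Res(\Phi_{n,\phi},q)$ does not vanish and $U_n(\phi)$ is well defined. Everything else reduces to the resultant identity together with the chain rule.
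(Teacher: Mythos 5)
Your proposal is correct and follows essentially the same route as the paper: both prove part \eqref{multiplier_product} by expressing $U_n(\phi)$ as $\prod_{\Phi_{n,\phi}(\beta)=0}\phi'(\beta)$ via the resultant-as-product-over-roots identity and then regrouping the factors into $n$-cycles with the chain rule. The only difference is cosmetic: for part \eqref{Cn_criterion} the paper simply cites the equivalence \eqref{Lambda_property} (a cycle has multiplier zero iff it contains a critical point), whereas you verify that equivalence directly through the non-vanishing of $q$ at periodic points and the identification of ramification with vanishing of the affine derivative --- a worthwhile bit of bookkeeping that the paper leaves implicit.
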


\begin{proof}
We write $\Phi_n$ for $\Phi_{n,\phi}$ throughout the proof. Note that \eqref{Cn_criterion} follows from \eqref{multiplier_product} using \eqref{Lambda_property}. By basic properties of polynomial resultants, we have
\[U_n(\phi)=\prod_{\Phi_n(\beta)=0}\frac{p(\beta)}{q(\beta)}=\prod_{\Phi_n(\beta)=0}\phi'(\beta).\]
If $\beta$ is any root of $\Phi_n$, the chain rule yields the identity
\[\lambda_{\phi,\beta}=\prod_{i=1}^{n}\phi'(\phi^i\beta).\]
The roots of $\Phi_n$ can be partitioned into $n$-cycles, and the above identity implies that roots within the same cycle have the same multiplier. Choosing representatives $\beta_1,\ldots, \beta_r$ of the $n$-cycles, part \eqref{multiplier_product} follows from the identities
\[\prod_{j=1}^r\lambda_{\phi,\beta_j}=\prod_{j=1}^r\prod_{i=1}^{n}\phi'(\phi^i\beta_j)=\prod_{\Phi_n(\beta)=0}\phi'(\beta)=U_n(\phi).\qedhere\]
\end{proof}

The following notation will be used throughout the remainder of the article: for elements $r,s\in K$, we define a rational function $\phi_{r,s}\in K(x)$ by
\begin{equation}\label{phi_rs}
\phi_{r,s}:=\frac{2x^2+(2-r)x+(2-r)}{-x^2+(2+r)x+2-r-s}.
\end{equation}

As observed by Manes and Yasufuku \cite{manes-yasufuku}*{Remark 3.2}, the map $\phi_{r,s}$ has degree two if and only if $r,s$ do not satisfy the equation \eqref{symlocuseq}, and in that case, $(r,s)$ are the coordinates of $\phi_{r,s}$ under the isomorphism \eqref{M2A2_iso}; in particular, $\Aut(\phi_{r,s})$ is trivial.   

\begin{prop}\label{Cn_equation}
Let $r,s$ be indeterminates, and $K=\Q(r,s)$. Set \[F_n:=U_n(\phi_{r,s})\in\Q(r,s),\]
where $U_n$ is defined as in Lemma \ref{Un_lem}. Then $F_n$ is a polynomial in $r$ and $s$ with integer coefficients. Moreover, as a subset of $\A^2(\Qbar)$, the set $C_n$ is equal to the set of points on the curve $F_n=0$. 
\end{prop}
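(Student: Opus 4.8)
The plan is to prove the two assertions separately: first that $F_n:=U_n(\phi_{r,s})$, which a priori is only an element of $\Q(r,s)$, in fact lies in $\Z[r,s]$, and second that its zero locus coincides with $C_n$. Throughout I would treat $r,s$ as transcendentals and keep in mind that the numerator and denominator displayed in \eqref{phi_rs} already have coefficients in $\Z[r,s]$. The first point to settle is that $U_n(\phi_{r,s})$ is even defined: the hypotheses of Lemma \ref{Un_lem} --- that $\Phi_{n,\phi_{r,s}}$ have nonzero discriminant and that $\infty$ not be $n$-periodic --- each fail only on a proper Zariski-closed subset of the $(r,s)$-plane, since a generic quadratic map has $|\Per_n|$ distinct points of exact period $n$ and does not have $\infty$ among them. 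Hence $F_n$ is a well-defined element of $\Q(r,s)$.

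For polynomiality and integrality I would argue as follows. Writing $\phi'_{r,s}=p/q$ in lowest terms over $K[x]$, one checks that generically $q$ equals the square of the denominator in \eqref{phi_rs} and $p$ is the corresponding Wronskian numerator, so $p,q\in\Z[r,s][x]$; the dynatomic polynomial $\Phi_{n,\phi_{r,s}}$ likewise has coefficients in $\Z[r,s]$, these being integer-polynomial expressions in the coefficients of $\phi_{r,s}$. Consequently the leading coefficient $\ell$ and the resultants $\Res(\Phi_{n,\phi_{r,s}},p)$, $\Res(\Phi_{n,\phi_{r,s}},q)$ all lie in $\Z[r,s]$, and $F_n$ is exhibited as a ratio of such elements. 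To upgrade this ratio to a genuine polynomial I would invoke Lemma \ref{Un_lem}\eqref{multiplier_product}: $F_n$ is the product of the multipliers of the $n$-cycles of $\phi_{r,s}$, and such a product is one of the standard multiplier invariants on $\calM_2\cong\A^2$. Since these invariants are regular functions with integer coefficients in the coordinates $(\sigma_1,\sigma_2)$, and since the coordinates of $\phi_{r,s}$ are $(r,s)$, it follows that $F_n\in\Z[r,s]$. An alternative, more self-contained route is to note that $F_n^{\,n}=\prod_{\lambda\in\Lambda_n(\phi_{r,s})}\lambda$ is the top symmetric function of the multiplier spectrum, hence lies in $\Z[r,s]$; then $F_n$ is a root of the monic polynomial $T^n-F_n^{\,n}$ over $\Z[r,s]$, and integrality follows since $\Z[r,s]$ is a UFD, hence integrally closed in $\Q(r,s)$.

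It remains to identify the zero locus of $F_n$ with $C_n$. For this I would combine the critical-cycle criterion of Lemma \ref{Un_lem}\eqref{Cn_criterion} with the fact that, for $(r,s)$ off the symmetry locus $\calS$, the pair $(r,s)\in\A^2$ is precisely the coordinate pair of the degree-two system $[\phi_{r,s}]$. Thus, for every $(r,s)$ lying outside $\calS$ and outside the proper closed set on which Lemma \ref{Un_lem} does not apply, we have $(r,s)\in C_n$ if and only if $\phi_{r,s}$ has an $n$-periodic critical point, if and only if $F_n(r,s)=0$. This establishes the desired set equality on a dense open subset of $\A^2(\Qbar)$. A closure argument then promotes this generic identity to an equality of zero loci: both $C_n$ and the vanishing locus of $F_n$ are plane curves (in particular $F_n\not\equiv 0$, since a generic quadratic map has no $n$-periodic critical point), they agree away from a proper closed subset, and one checks that $F_n$ has no spurious component supported on that subset, whence $C_n=\{F_n=0\}$ as subsets of $\A^2(\Qbar)$.

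The main obstacle I anticipate is the integrality statement. The resultant formula presents $F_n$ only as a quotient of two elements of $\Z[r,s]$, and there is no formal reason for such a quotient either to be a polynomial or to have integer rather than merely rational coefficients; both facts are forced only by the dynamical meaning of $F_n$ as a product of multipliers. Verifying that $F_n$ has no poles (equivalently, that $\Res(\Phi_{n,\phi_{r,s}},q)$ divides $\Res(\Phi_{n,\phi_{r,s}},p)\cdot\ell^{\deg q-\deg p}$) and that the resulting polynomial has integer coefficients is therefore the technical heart of the argument, and is where the multiplier-invariant theory of $\calM_2$ (or the integral-closure argument above) does the essential work; the remaining degeneration on $\calS$ and on the locus where Lemma \ref{Un_lem} fails requires only routine bookkeeping.
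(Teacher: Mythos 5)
Your integrality argument is essentially the paper's own: there, too, Lemma \ref{Un_lem}\eqref{multiplier_product} is used to identify $F_n$ with the product of the multipliers of the $n$-cycles --- in Milnor's notation $\sigma_N^{(n)}(\phi_{r,s})$, with $N$ the number of $n$-cycles --- and integrality is then quoted from \cite{silverman_Md}*{Corollary 5.2}. Your integral-closure variant via the monic relation $T^n-F_n^{\,n}$ is a pleasant supplement, though it still rests on the same invariant theory to place $F_n^{\,n}$ in $\Z[r,s]$.

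The genuine gap is in the zero-locus step. Your pointwise use of Lemma \ref{Un_lem}\eqref{Cn_criterion} only yields $C_n\setminus B=\{F_n=0\}\setminus B$, where $B$ is a proper closed set containing $\calS$ together with the locus where the lemma or specialization fails; your closure argument then requires two facts that are never established: that $\{F_n=0\}$ has no component contained in $B$ (you write ``one checks'' without checking), and, symmetrically, that $C_n$ has no component contained in $B$ (a requirement you do not mention at all). Neither is routine for general $n$. For example, since $\calS$ is an irreducible cubic, excluding a component of $C_n$ inside $\calS$ amounts to showing that not every quadratic system with nontrivial automorphisms has an $n$-periodic critical point --- a statement the paper verifies only for $n\in\{2,3,4\}$, and only by explicit computation (Proposition \ref{period234_autos}); note also that $C_n$ genuinely meets the excluded locus, e.g.\ $[1/x^2]\in C_2\cap\calS$, so $B$ is not irrelevant to the asserted set equality. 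The paper sidesteps all of this: once $F_n=\sigma_N^{(n)}(\phi_{r,s})$ is known, \cite{milnor}*{Corollary D.2} states that $\Per_n(0)=C_n$ \emph{is} the curve $\sigma_N^{(n)}=0$. In other words, the multiplier invariant computes the product of multipliers (with the appropriate multiplicity conventions) at \emph{every} class in $\calM_2(\Qbar)$, including classes on $\calS$ and on degenerate loci where $\phi_{r,s}$ is not even a degree-two model, so the equivalence ``$F_n$ vanishes if and only if there is a critical $n$-cycle'' holds everywhere and no closure argument is needed. Since you already appeal to this invariant theory for integrality, the repair is simply to invoke its full pointwise strength --- that is, to cite Milnor's Corollary D.2 --- rather than the generic-plus-closure route.
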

\begin{proof}
By Lemma \ref{Un_lem}, $F_n$ is the product of the multipliers of the $n$-cycles of $\phi_{r,s}$. Thus, in the notation used by Milnor \cite{milnor}*{Corollary D.2}, we have $F_n=\sigma_N^{(n)}(\phi_{r,s})$, where $N$ is the number of $n$-cycles of $\phi_{r,s}$, and therefore $C_n$ (denoted $\Per_n(0)$ by Milnor) is the curve defined by $F_n=0$. Moreover, it follows from \cite{silverman_Md}*{Corollary 5.2} that $F_n\in\Z[r,s]$.
\end{proof}

With $F_n$ as in Proposition \ref{Cn_equation}, we compute (using resultants) that
\begin{align*}
F_1 &= r - 2,\\
F_2 &=2r + s,\\
F_3 &=2r^3 + 5r^2s - r^2 + 4rs^2 - 2rs + 12r + s^3 + 28,\\
F_4 &=2r^5+4r^4s^2-3r^4s+\cdots+60s^3-48s^2+96s+304.
\end{align*}
In particular, we recover the equations for $C_1$, $C_2$, and $C_3$ obtained by Milnor in \cite{milnor}*{Lemma 3.4 and Example D.3}. Using these equations, we will now parametrize the curves $C_2$, $C_3$, and $C_4$. Let $t$ be an indeterminate, and define rational functions $\varepsilon,\rho,\eta,\kappa\in\Q(t)$ by 

\begin{align}\label{etakappa}
\begin{split}
    \varepsilon &=-(t^3 - t^2 + 5t + 1)/t,\\
    \rho &= (t^3 - 2t^2 + 7t + 2)/t,\\
    \eta &=-\frac{t^6 + t^5 + 2t^4 + 3t^3 - 2t^2 - 5t - 1}{t(t - 1)(t + 1)^2},\\  
    \kappa &=\frac{t^5 + 2t^4 + 3t^3 - 5t - 2}{t(t^2 - 1)}.
\end{split}
\end{align}

\begin{prop}\label{C2C3C4_eqns} The sets $C_n(\Q)\subseteq\A^2(\Q)$, $n\in\{2,3,4\}$, are described by
    \begin{align*}
        C_2(\Q)&=\{(v,-2v):v\in\Q\},\\
        C_3(\Q)&=\{(\varepsilon(v),\rho(v)):v\in\Q\setminus\{0\}\},\\
        C_4(\Q)&=\{(\eta(v),\kappa(v)):v\in\Q\setminus\{0,\pm 1\}\}.
    \end{align*}
\end{prop}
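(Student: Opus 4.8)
The plan is to prove that the displayed rational parametrizations are correct by exhibiting, in each case, an explicit map from the parameter line to the curve $C_n$ and verifying that it is a bijection onto $C_n(\Q)$ (with the indicated points removed). By Proposition \ref{Cn_equation}, $C_n$ is the affine plane curve defined by $F_n=0$, so everything reduces to plane-curve geometry once we know $F_n$. The case $n=2$ is immediate: since $F_2=2r+s$, the curve $C_2$ is the line $s=-2r$, and writing $v=r$ gives the stated description $\{(v,-2v):v\in\Q\}$ with no exceptional points.

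For $n=3$ and $n=4$ the curves are claimed to be rational (of genus zero), so the strategy is to produce a birational parametrization and then check it is defined over $\Q$ and surjective onto the rational points. First I would verify directly that the given rational functions land on the curve: substitute $r=\varepsilon(t)$, $s=\rho(t)$ into $F_3$ and check that the result vanishes identically in $\Q(t)$ (and similarly $r=\eta(t)$, $s=\kappa(t)$ into $F_4$), which is a finite resultant computation. Next I would establish that these parametrizations are \emph{birational}, i.e. that $t$ can be recovered as a rational function of $(r,s)$ on the curve; this exhibits an inverse morphism and shows the parametrization has degree one, so it is generically injective. Combined with the fact that a genus-zero curve with a rational point has all its rational points (away from finitely many) parametrized by $\PP^1(\Q)$, this pins down $C_n(\Q)$ up to a controlled finite discrepancy. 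To prove the curve has genus zero, one can either cite that $C_n$ is known to be rational for $n\le 4$ (stated in the introduction) or observe that the existence of a birational map from $\PP^1$ to it already forces genus zero.

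The delicate part, and the main obstacle, is the bookkeeping of the exceptional parameter values $t\in\{0,\pm 1\}$ and the behavior at the boundary of $\PP^1$. A birational parametrization $\PP^1\dashrightarrow C_n$ is an isomorphism only away from finitely many points: some values of $t$ may map to points at infinity on $C_n$ (hence off the affine chart $\A^2$) or to singular points, and conversely a finite set of rational points on $C_n$ might be missed or might arise only from $t=\infty$. Thus I would need to analyze precisely which $t$ give genuine affine points: the excluded value $t=0$ in the $n=3$ case and the excluded values $t\in\{0,\pm 1\}$ in the $n=4$ case are exactly the poles of $\varepsilon,\rho$ respectively of $\eta,\kappa$ (note the denominators $t(t-1)(t+1)^2$ and $t(t^2-1)$), so these correspond to points where the parametrization leaves $\A^2$, and removing them is necessary for the image to be well-defined. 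The remaining care is to confirm there are no \emph{other} rational points on $C_n$ lying outside the image --- i.e. that the parametrization is surjective onto $C_n(\Q)$, not merely dominant. For a smooth rational curve this is automatic once we know the parametrization is a bijection $\PP^1(\Q)\setminus S \to C_n(\Q)$; if $C_n$ has singular points one must check each singular rational point is covered. I would carry this out by computing the inverse rational map explicitly and verifying that composing it with the parametrization is the identity on the stated domain, which simultaneously proves injectivity and surjectivity onto the claimed set.
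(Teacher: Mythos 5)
Your proposal is correct and follows essentially the same route as the paper: the case $n=2$ is read off directly from $F_2=2r+s$, and for $n=3,4$ the genus-zero curves $F_n=0$ are handled by a rational parametrization defined over $\Q$, with the excluded parameter values corresponding to the poles of the parametrizing functions. The only difference is one of presentation: the paper delegates the construction and all the bookkeeping you describe (birationality, exceptional values, surjectivity onto rational points) to the \textsc{Magma} function \texttt{Parametrize}, citing the algorithms of Sendra--Winkler--P\'erez-D\'{\i}az, whereas you propose to carry out that verification explicitly.
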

\begin{proof}
The case of $C_2$ is trivial using the equation $2r+s=0$. The curves defined by $F_3=0$ and $F_4=0$ have genus 0, and can therefore be parametrized using the methods discussed in \cite{sendra-winkler-diaz}*{Chapters 4-6}. The \textsc{Magma} function \texttt{Parametrize} yields the stated descriptions of $C_3(\Q)$ and $C_4(\Q)$.
\end{proof}

Next, we will use the equation $F_n=0$ for the curve $C_n$ in order to describe the fields of moduli of dynamical systems in $C_n\cap\calS$. This, in particular, yields proofs of Theorems \ref{main_thm}\eqref{main_thm_nontrivial} and \ref{cv_extensions}\eqref{3_nontrivial}.

\begin{prop}\label{period234_autos} As subsets of $\calM_2(\Qbar)$, the sets $C_n\cap\calS$ satisfy the following:
\begin{enumerate}[(a)]
    \item\label{C2meetS} $C_2\cap\calS=\{[1/x^2]\}$;
    \item\label{C3meetS} $C_3\cap\calS$ has cardinality four, and the field of moduli of each of its elements is a number field of absolute degree four;
    \item\label{C4meetS} $C_4\cap\calS$ has cardinality eight, and the field of moduli of each of its elements has absolute degree two or six. 
\end{enumerate}
\end{prop}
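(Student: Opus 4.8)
The plan is to compute each intersection $C_n\cap\calS$ explicitly using the equation $F_n=0$ for $C_n$ from Proposition \ref{Cn_equation} together with the equation \eqref{symlocuseq} for $\calS$, and then to read off the fields of moduli from the fact (recorded in \textsection\ref{M2_section}) that the field of moduli of a system with coordinates $(r,s)$ is exactly $\Q(r,s)$. Concretely, since each curve $C_n$ is rational, I would substitute the parametrization of $C_n$ from Proposition \ref{C2C3C4_eqns} into the left-hand side of \eqref{symlocuseq}; clearing denominators produces a single-variable polynomial $h_n\in\Q[v]$ whose roots $v_0\in\Qbar$ correspond to the points of $C_n\cap\calS$ lying in the image of the parametrization. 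Because the parametrizations produced by \texttt{Parametrize} are birational, the parameter $v$ is itself a rational function of the coordinates $(r,s)$ along $C_n$; hence for each such root the field of moduli --- $\Q(\varepsilon(v_0),\rho(v_0))$ for $n=3$ and $\Q(\eta(v_0),\kappa(v_0))$ for $n=4$ --- equals $\Q(v_0)$, whose degree is just the degree of the irreducible factor of $h_n$ having $v_0$ as a root.

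For part \eqref{C2meetS} this can be carried out by hand: substituting $(v,-2v)$ into \eqref{symlocuseq} collapses to $(v+6)^2$, so the only intersection point is $(r,s)=(-6,12)$, with field of moduli $\Q$. One then checks that the quadratic map $1/x^2$ has all three fixed-point multipliers equal to $-2$, so its coordinates are $(\sigma_1,\sigma_2)=(-6,12)$; since $0\leftrightarrow\infty$ is a $2$-periodic critical orbit and $x\mapsto\zeta_3 x$ lies in $\Aut(1/x^2)$, the point $(-6,12)$ is indeed $[1/x^2]\in C_2\cap\calS$.

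For parts \eqref{C3meetS} and \eqref{C4meetS} I would factor $h_3$ and $h_4$ over $\Q$. After discarding the extraneous factors coming from cleared denominators and from the excluded parameter values ($v=0$, resp.\ $v=0,\pm1$), I expect $h_3$ to contribute a single irreducible quartic, giving four points forming one Galois orbit, each with field of moduli of degree four; and I expect $h_4$ to contribute an irreducible quadratic times an irreducible sextic, giving $2+6=8$ points whose fields of moduli have degrees two and six. Here the number of distinct points is exactly the sum of the degrees of these factors, since an irreducible polynomial over a field of characteristic zero is separable, and distinct admissible parameter values yield distinct points by birationality.

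The main obstacle is bookkeeping rather than conceptual: the curves $\calS$ and $C_n$ meet along their singular and boundary loci (the cusp of the cuspidal cubic, points at infinity, and the parameter values omitted from the parametrization), and these inflate $h_n$ with spurious factors that must be separated from the factors recording genuine affine intersection points. In particular one must (i) verify that no point of $C_n$ omitted by the parametrization lies on $\calS$, and (ii) confirm that the field of moduli does not collapse --- that is, that $\Q(v_0)$ is genuinely generated by the coordinates and is not a proper extension of $\Q(r_0,s_0)$. Both points are settled by a direct computation in \textsc{Magma} or \textsc{Sage} of the coordinates, and hence the field of moduli, of a representative system in each Galois orbit; the size of $F_4$ makes this step necessarily machine-assisted.
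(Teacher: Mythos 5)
Your proposal is correct and reaches the same conclusions, but it computes the intersection by a genuinely different route. The paper does not use the parametrizations of Proposition \ref{C2C3C4_eqns} here at all: it forms the subscheme $X_n\subseteq\A^2=\Spec\Q[r,s]$ cut out by $F_n(r,s)=0$ together with \eqref{symlocuseq}, verifies by a \textsc{Magma} computation that $X_n$ is $0$-dimensional, constructs all of its algebraic points, and reads off the fields of moduli as the fields of definition of those points (using, as you do, that the field of moduli is generated by the coordinates); the output is exactly the point $(-6,12)=[1/x^2]$ for $n=2$, four points with quartic field $\Q[x]/(x^4-16x^3+112x^2-320x+512)$ for $n=3$, and $2+6$ points over $\Q(\sqrt{-3})$ and a sextic field for $n=4$, matching what you predict. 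Your pullback of \eqref{symlocuseq} along the parametrization trades this bivariate elimination for a univariate factorization, which makes the degree bookkeeping transparent (irreducible factor degrees are the field-of-moduli degrees), and your by-hand computation $(v+6)^2$ for $n=2$ is a pleasant simplification of a step the paper delegates to the machine. The cost is precisely the bookkeeping you flag, plus two points worth making explicit: Proposition \ref{C2C3C4_eqns} as stated describes only \emph{rational} points, so you are implicitly using that the parametrization, extended to a morphism $\PP^1\to\overline{C_n}$, hits every $\Qbar$-point of $C_n$ apart from the finitely many images of excluded and infinite parameter values; and ``distinct admissible parameters give distinct points'' is not automatic from birationality, since a birational morphism can identify parameters lying over singular points of the image, so the cardinality claims need either this injectivity check or a direct comparison of coordinates. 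All of these are finite, machine-checkable verifications of the same order as the paper's direct scheme computation, so your argument closes; the paper's formulation simply sidesteps them.
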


\begin{proof}
For $n\in\{2,3,4\}$, we consider the coordinates of dynamical systems in the set $C_n\cap\calS$. Let $\A^2=\Spec\Q[r,s]$ and let $X_n$ be the subscheme of $\A^2$ defined by the equations $F_n(r,s)=0$ and \eqref{symlocuseq}. Note that $C_n\cap\calS$ is in bijection with $X_n(\Qbar)$ via the isomorphism \eqref{M2A2_iso}. A computation in \textsc{Magma} shows that $X_n$ is 0-dimensional and constructs all algebraic points on $X_n$. We find that $X_2$ has a single point, namely $(-6,12)$, and this point coincides with the coordinates of the dynamical system $[1/x^2]$. This proves \eqref{C2meetS}.

The fields of moduli referenced in parts \eqref{C3meetS} and \eqref{C4meetS} of the proposition are equal to the fields of definition of the points in $X_3(\Qbar)\cup X_4(\Qbar)$, and can thus be computed. For every point $P\in X_3(\Qbar)$ we find that $\Q(P)$ is isomorphic to the number field $\Q[x]/(f)$, where $f=x^4 - 16x^3 + 112x^2 - 320x + 512$. Similarly, we conclude that $X_4(\Qbar)$ has two points defined over the quadratic field $\Q(\sqrt{-3})$ and six points over the number field with defining polynomial $x^6 - 26x^5 + 259x^4 - 1248x^3 + 3328x^2 - 5312x + 43526$.
\end{proof}

\subsection{Rational conjugacy classes} Let $\Cn$ denote the set of all arithmetic dynamical systems $\langle\phi\rangle$, where $\phi\in\Rat_2(\Q)$ has an $n$-periodic critical point in $\PP^1(\Qbar)$. Equivalently,
\[\Cn=\{\langle\phi\rangle:\phi\in\Rat_2(\Q),\,[\phi]\in C_n\}.\]
Using the descriptions of the sets $C_n(\Q)$ and $C_n\cap\calS$ obtained 
in \textsection\ref{Cn_eqn_section}, we now derive explicit one-parameter descriptions of the sets $\Cn$.

\begin{lem}\label{manes-yasufuku_lem}
Suppose that $\phi\in\Rat_2(\Q)$ has trivial automorphism group. Let $(r,s)\in\A^2(\Qbar)$ be the coordinates of $[\phi]\in\calM_2(\Qbar)$, and define $\phi_{r,s}$ as in \eqref{phi_rs}. Then $r,s\in\Q$ and $\langle\phi\rangle=\langle\phi_{r,s}\rangle$. 
\end{lem}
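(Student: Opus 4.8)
The plan is to prove Lemma \ref{manes-yasufuku_lem} by combining the intrinsic characterization of the coordinates $(r,s)$ as elementary symmetric functions of the multiplier spectrum with the fact (quoted from Manes--Yasufuku) that $\phi_{r,s}$ realizes those coordinates whenever it is nondegenerate. First I would recall that $(r,s)$ are by definition the elementary symmetric functions $\sigma_1(\Lambda_1(\phi)),\sigma_2(\Lambda_1(\phi))$ of the multiplier spectrum at the fixed points of $\phi$. Since $\phi$ has coefficients in $\Q$, the set $\Per_1(\phi)$ is Galois-stable over $\Q$, and the multiplier is a Galois-equivariant assignment; hence the multiset $\Lambda_1(\phi)$ is stable under $\Gal(\Qbar/\Q)$, which forces its elementary symmetric functions $r$ and $s$ to lie in $\Q$. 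This is the content of the remark following \eqref{M2A2_iso} that $\Q_\phi=\Q(\sigma_1,\sigma_2)=\Q(r,s)$, and the hypothesis $\phi\in\Rat_2(\Q)$ gives $\Q_\phi\subseteq\Q$, so $r,s\in\Q$.

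Next I would establish that $\phi_{r,s}$ genuinely has degree two, so that it is a legitimate quadratic dynamical system with coordinates $(r,s)$. By the cited observation of Manes--Yasufuku, $\phi_{r,s}$ degenerates precisely when $(r,s)$ satisfies the symmetry-locus equation \eqref{symlocuseq}; but $(r,s)$ are the coordinates of $[\phi]$, and by hypothesis $\Aut(\phi)$ is trivial, so $[\phi]\notin\calS$ and therefore $(r,s)$ does \emph{not} lie on \eqref{symlocuseq}. Consequently $\phi_{r,s}\in\Rat_2(\Q)$ and, again by Manes--Yasufuku, its coordinates under \eqref{M2A2_iso} are exactly $(r,s)$. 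Thus $[\phi]$ and $[\phi_{r,s}]$ are the \emph{same} point of $\calM_2(\Qbar)$, i.e.\ $\phi$ and $\phi_{r,s}$ are linearly conjugate over $\Qbar$.

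It remains to promote this geometric conjugacy to an arithmetic one, that is, to upgrade $[\phi]=[\phi_{r,s}]$ to $\langle\phi\rangle=\langle\phi_{r,s}\rangle$. This is the step I expect to be the main obstacle, since in general two $\Q$-rational maps in the same $\Qbar$-conjugacy class need not be conjugate over $\Q$ (the discrepancy is governed by a twisting cohomology set). The key leverage is the triviality of $\Aut(\phi)$: I would argue that when the geometric automorphism group is trivial, the obstruction to descending the conjugacy to $\Q$ vanishes, so that any two maps in $\Rat_2(\Q)$ lying in the same geometric class are automatically $\Q$-conjugate. Concretely, I would invoke the general principle (attributable to Silverman's theory of fields of moduli versus fields of definition, \cite{silverman_FOD}) that a quadratic map with trivial automorphism group is determined up to $\Q$-conjugacy by its field-of-moduli data; the twists are classified by $H^1(\Gal(\Qbar/\Q),\Aut(\phi))$, which is trivial when $\Aut(\phi)$ is trivial. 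Applying this to the pair $\phi,\phi_{r,s}$ --- both in $\Rat_2(\Q)$, both with trivial automorphism group and identical coordinates --- yields a conjugating element in $\Rat_1(\Q)$, giving $\langle\phi\rangle=\langle\phi_{r,s}\rangle$ and completing the proof.
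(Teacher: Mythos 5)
Your proposal is correct and follows essentially the same route as the paper's proof: rationality of $(r,s)$ via Galois-invariance of the multiplier spectrum (the paper cites \cite{silverman_book}*{Theorem 4.50} for this), nondegeneracy of $\phi_{r,s}$ from $[\phi]\notin\calS$, geometric conjugacy $[\phi]=[\phi_{r,s}]$ via the Manes--Yasufuku lemma, and the descent to $\Q$-conjugacy from triviality of $\Aut(\phi)$, which is exactly the twisting/cohomology statement the paper invokes as \cite{silverman_book}*{Proposition 4.73}. The only difference is that you unpack the cited results into their underlying arguments rather than quoting them.
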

\begin{proof}
 The monic polynomial whose roots are the elements of $\Lambda_1(\phi)$ has rational coefficients; see \cite{silverman_book}*{Theorem 4.50}. In particular, the coordinates $(r,s)$ are rational. Since $\Aut(\phi)$ is trivial, the point $(r,s)$ does not lie on the curve $\calS$, and therefore  $\phi_{r,s}\in\Rat_2(\Q)$. Moreover, we have $[\phi]=[\phi_{r,s}]$ by \cite{manes-yasufuku}*{Lemma 3.1}, and $\langle\phi\rangle=\langle\phi_{r,s}\rangle$ by \cite{silverman_book}*{Proposition 4.73}.
\end{proof}

\begin{prop}\label{parametrization2}
Define $\phi_c$ for $c\in\Q\setminus\{0\}$ and $\psi_c$ for $c\in\Q\setminus\{0,4\}$ by
\[\phi_c=\frac{c(x-1)}{x^2}\quad\text{and}\quad\psi_c=\frac{2x-1}{cx^2-1}.\]
\begin{enumerate}[(a)]
    \item\label{C2partition} The set $\langle C_2\rangle$ is a disjoint union
    \[\{\langle\phi_c\rangle:c\in\Q\setminus\{0\}\}\cup\{\langle\psi_c\rangle:c\in\Q\setminus\{0,4\}\},\]
    where the left-hand set corresponds to maps $\phi$ such that $\Aut(\phi)$ is trivial, and the right-hand set to those with $\Aut(\phi)$ nontrivial.
    \item\label{phi_v_critical} Every map $\phi_c$ has exactly one $2$-periodic critical point, namely $0$.
    \item\label{psi_v_critical} Every map $\psi_c$ has two $2$-periodic critical points, namely
    \[\alpha=\frac{c+\sqrt{c(c-4)}}{2c}\quad\text{and}\quad 1-\alpha=\frac{c-\sqrt{c(c-4)}}{2c}.\] Moreover, these points form a $2$-cycle under iteration of $\psi_c$.
\end{enumerate}
\end{prop}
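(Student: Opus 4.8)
The plan is to prove the concrete statements \eqref{phi_v_critical} and \eqref{psi_v_critical} by a direct analysis of critical points, and then to deduce the structural statement \eqref{C2partition} by separating the two cases according to whether $\Aut(\phi)$ is trivial, using Proposition \ref{period234_autos}\eqref{C2meetS} to control the nontrivial case. In particular, the arguments for \eqref{phi_v_critical} and \eqref{psi_v_critical} will also certify that $[\phi_c],[\psi_c]\in C_2$, which is what links the two families to $\langle C_2\rangle$.

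For \eqref{phi_v_critical} I would first locate the two critical points of $\phi_c$. Since $\phi_c=c(x-1)/x^2$ has a double pole at $0$ and a simple zero at $\infty$, the point $0$ is a ramification point with $\phi_c(0)=\infty$ and $\phi_c(\infty)=0$, so $\{0,\infty\}$ is a $2$-cycle and $0$ is a $2$-periodic critical point. Differentiating gives $\phi_c'=c(2-x)/x^3$, whose only other zero is $x=2$; a short computation shows $\phi_c^2(2)=2$ only for $c=8$, so $2$ is not $2$-periodic in general and $0$ is the unique $2$-periodic critical point. For \eqref{psi_v_critical}, differentiating $\psi_c=(2x-1)/(cx^2-1)$ shows its numerator is a constant multiple of $cx^2-cx+1$, whose roots sum to $1$; these are precisely $\alpha$ and $1-\alpha$, the two critical points. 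Using the defining relation $c\alpha^2=c\alpha-1$ one checks directly that $\psi_c(\alpha)=1-\alpha$, and symmetrically $\psi_c(1-\alpha)=\alpha$, so the two critical points form a $2$-cycle.

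For the trivial-automorphism half of \eqref{C2partition} I would invoke Lemma \ref{manes-yasufuku_lem}: any $\phi\in\Rat_2(\Q)$ with trivial automorphism group and $[\phi]\in C_2$ has rational coordinates $(r,s)\in C_2(\Q)$ and satisfies $\langle\phi\rangle=\langle\phi_{r,s}\rangle$. By Proposition \ref{C2C3C4_eqns} we have $C_2(\Q)=\{(v,-2v):v\in\Q\}$, and by Proposition \ref{period234_autos}\eqref{C2meetS} the unique point of $C_2\cap\calS$ is the coordinate pair $(-6,12)$ of $[1/x^2]$, so the trivial case corresponds to $v\ne-6$. To match $\phi_c$ to this family I would compute its fixed-point multiplier spectrum: the fixed points are the roots of $x^3-cx+c$, at each of which the multiplier equals $(2-x)/(x-1)$, and summing gives $\sigma_1=c-6$. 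Hence the coordinates of $\phi_c$ are $(c-6,-2(c-6))$; these agree with $(-6,12)$ only at $c=0$, so $\Aut(\phi_c)$ is trivial for $c\ne0$, and Lemma \ref{manes-yasufuku_lem} applied to $\phi_c$ gives $\langle\phi_c\rangle=\langle\phi_{c-6,\,-2(c-6)}\rangle$. Since $v=c-6$ is a bijection from $\Q\setminus\{0\}$ onto $\Q\setminus\{-6\}$, this identifies $\{\langle\phi_c\rangle:c\in\Q\setminus\{0\}\}$ with exactly the trivial-automorphism classes in $\langle C_2\rangle$.

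The nontrivial-automorphism half of \eqref{C2partition} is where I expect the main difficulty. By Proposition \ref{period234_autos}\eqref{C2meetS} every such class lies in the single geometric system $[1/x^2]$, so the task is to classify the $\Q$-rational models of $1/x^2$ up to $\Q$-conjugacy and to show they are exactly the $\langle\psi_c\rangle$, all distinct. That each $\psi_c$ belongs to $[1/x^2]$ is clean: either by an explicit $\Qbar$-conjugation carrying the critical cycle $\{\alpha,1-\alpha\}$ onto $\{0,\infty\}$, or by computing that its coordinates are the constant pair $(-6,12)$, so that $[\psi_c]=[1/x^2]$ by the isomorphism \eqref{M2A2_iso} and in particular $\Aut(\psi_c)$ is nontrivial. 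The hard part is the twist classification itself, because $\Aut(1/x^2)$ is the nonabelian group of order six generated by $x\mapsto1/x$ and $x\mapsto\omega x$ (with $\omega$ a primitive cube root of unity), on which $\Gal(\Qbar/\Q)$ acts nontrivially; one must determine all $\Q$-forms, prove that $c$ labels them injectively (note that the fixed-point cubic $cx^3-3x+1$ has discriminant $27(4-c)/c^3$, so the square class $c(c-4)$ recovers only the quadratic field $\Q(\sqrt{c(c-4)})$ over which the critical $2$-cycle splits, and is \emph{not} enough on its own to separate all values of $c$), and show that no further forms occur. I would carry this out by parametrizing the conjugating elements $\sigma\in\PGL_2(\Qbar)$ for which $\sigma^{-1}(1/x^2)\sigma$ has rational coefficients, reducing modulo the $\PGL_2(\Q)$-action, and matching the resulting normal forms to $\psi_c$; the exclusions $c\in\{0,4\}$ then appear as the degenerate parameters where $\psi_c$ drops degree or the two critical points collide. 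The disjointness of the two families in \eqref{C2partition} is immediate, since the $\phi_c$ have trivial automorphism group while the $\psi_c$ do not.
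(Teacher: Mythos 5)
Your handling of parts \eqref{phi_v_critical} and \eqref{psi_v_critical}, of the forward inclusions in \eqref{C2partition}, and of the trivial-automorphism half of the reverse inclusion is correct. Indeed, that last piece --- combining Lemma \ref{manes-yasufuku_lem} with $C_2(\Q)=\{(v,-2v):v\in\Q\}$ from Proposition \ref{C2C3C4_eqns} and with $C_2\cap\calS=\{[1/x^2]\}$ from Proposition \ref{period234_autos}\eqref{C2meetS}, then matching parameters via $v=c-6$ --- is \emph{more} self-contained than the paper, which proves only the inclusion $\{\langle\phi_c\rangle\}\cup\{\langle\psi_c\rangle\}\subseteq\langle C_2\rangle$ by the same coordinate computations you perform ($(c-6,12-2c)$ for $\phi_c$ and $(-6,12)$ for $\psi_c$) and then disposes of the entire reverse inclusion by citing Lemmas 2.5 and 2.6 of \cite{krumm-lacy}. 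One small loose end in your part \eqref{phi_v_critical}: having found that $\phi_c^2(2)=2$ exactly when $c=8$, you must still observe (as the paper does) that for $c=8$ the point $2$ is \emph{fixed}, hence of period $1$ rather than $2$; saying $2$ is ``not $2$-periodic in general'' does not cover this case, and the claim is for every $c$. Note also that the injectivity of $c\mapsto\langle\psi_c\rangle$, which you flag as part of the difficulty, is not actually required by the statement, since the proposition only asserts equality of sets.

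The genuine gap is the nontrivial-automorphism half of the reverse inclusion: showing that every $\phi\in\Rat_2(\Q)$ with $\Aut(\phi)$ nontrivial and $[\phi]\in C_2$ --- equivalently, by Proposition \ref{period234_autos}\eqref{C2meetS}, every $\Q$-rational model of the single geometric class $[1/x^2]$ --- satisfies $\langle\phi\rangle=\langle\psi_c\rangle$ for some $c\in\Q\setminus\{0,4\}$. You correctly identify this as a twist classification and correctly explain why it is delicate: $\Aut(1/x^2)$ is the order-six group generated by $x\mapsto 1/x$ and $x\mapsto\omega x$, it is nonabelian, and $\Gal(\Qbar/\Q)$ acts on it nontrivially, so the rational conjugacy classes inside $[1/x^2]$ are governed by a nonabelian cohomology set. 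But the proposal stops at a plan (``parametrize the conjugating elements $\sigma$, reduce modulo the $\PGL_2(\Q)$-action, match the resulting normal forms to $\psi_c$''): no parametrization of the admissible conjugators or cocycles is actually carried out, there is no verification that every resulting $\Q$-form is $\Q$-conjugate to some $\psi_c$, and no proof that the only excluded parameters are $c\in\{0,4\}$. This is precisely the content that the paper discharges by citation to \cite{krumm-lacy}, so as written your proof of \eqref{C2partition} is incomplete; to finish it you would need either to execute that twist computation in full or to invoke such a classification result.
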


\begin{proof}
For every nonzero $c\in\Q$, we compute that the coordinates of $[\phi_c]$ are $(r,s)=(c-6,12-2c)$. Since $s=-2r$, Proposition \ref{C2C3C4_eqns} implies that $[\phi_c]\in C_2$ and thus $\langle\phi_c\rangle\in\langle C_2\rangle$. Moreover, the left-hand side of the equation \eqref{symlocuseq} takes the value $c^2$, which is nonzero, so $\Aut(\phi_c)$ is trivial.

Similarly, the coordinates of every map $\psi_c$ are $(r,s)=(-6,12)$, which do satisfy \eqref{symlocuseq}, so $\Aut(\psi_c)$ is nontrivial. Moreover, we have $s=-2r$, so $[\psi_c]\in C_2$ and therefore $\langle\psi_c\rangle\in\langle C_2\rangle$. This proves one inclusion in \eqref{C2partition}; the reverse inclusion follows from Lemmas 2.5 and 2.6 in \cite{krumm-lacy}.

To prove \eqref{phi_v_critical}, we compute that the two critical points of $\phi_c$ are $0$ and $2$, and $0$ is 2-periodic for $\phi_c$. Moreover, $2$ is not 2-periodic, as $\phi_c^{2}(2)=(4c - 16)/c$, which equals $2$ if and only if $c=8$, in which case $2$ is fixed by $\phi_c$. This proves \eqref{phi_v_critical}, and part \eqref{psi_v_critical} is easily verified.
\end{proof}

\begin{prop}\label{parametrization3}
Defining maps $\phi_c\in\Q(x)$ for $c\in\Q\setminus\{0\}$ by
\[\phi_c=1-\frac{c+1}{x}+\frac{c}{x^2},\]
we have $\langle C_3\rangle=\{\langle\phi_c\rangle:c\in\Q\setminus\{0\}\}$. Moreover, every map $\phi_c$ has either one or two $3$-periodic critical points, which in any case are defined over $\Q$.
\end{prop}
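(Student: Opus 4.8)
The plan is to obtain the ``moreover'' clause and one inclusion directly from the shape of $\phi_c$, and then to reduce the identity $\langle C_3\rangle=\{\langle\phi_c\rangle\}$ to matching two rational parametrizations of $C_3(\Q)$.

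First I would study a single map. Writing $\phi_c=(x-1)(x-c)/x^2$, one sees that $\phi_c\in\Rat_2(\Q)$ exactly when $c\neq 0$, and that $\phi_c(0)=\infty$, $\phi_c(\infty)=1$, $\phi_c(1)=0$, so $\{0,\infty,1\}$ is a $3$-cycle. Since $0$ is a double pole of $\phi_c$ it is a critical point, hence a $3$-periodic critical point for every admissible $c$; this already gives $\{\langle\phi_c\rangle:c\in\Q\setminus\{0\}\}\subseteq\langle C_3\rangle$. A short differentiation, $\phi_c'(x)=((c+1)x-2c)/x^3$, shows that the only other critical point is $2c/(c+1)$ for $c\neq-1$ and $\infty$ for $c=-1$; in either case it lies in $\PP^1(\Q)$ and is distinct from $0$. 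Hence $\phi_c$ has exactly two critical points, both rational, at least one of which is $3$-periodic, which is precisely the ``moreover'' clause.

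For the reverse inclusion I would first show that any $\phi\in\Rat_2(\Q)$ with $[\phi]\in C_3$ has trivial automorphism group. Indeed, $\phi\in\Q(x)$ forces the fixed-point multiplier polynomial to have rational coefficients, so the coordinates $(\sigma_1,\sigma_2)$ are rational and $\Q_\phi=\Q(\sigma_1,\sigma_2)=\Q$; were $\Aut(\phi)$ nontrivial we would have $[\phi]\in C_3\cap\calS$, contradicting Proposition \ref{period234_autos}\eqref{C3meetS}, which forces $\Q_\phi$ to have absolute degree four. The same reasoning applies to each $\phi_c$ (its coordinates are rational and lie on $C_3$, hence off $\calS$, since $C_3\cap\calS$ has no rational point), so Lemma \ref{manes-yasufuku_lem} yields $\langle\phi\rangle=\langle\phi_{r,s}\rangle$ with $(r,s)\in C_3(\Q)$ for the normal form \eqref{phi_rs}, and likewise $\langle\phi_c\rangle=\langle\phi_{r(c),s(c)}\rangle$, where $(r(c),s(c))$ denotes the coordinates of $[\phi_c]$. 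Both $\langle C_3\rangle$ and $\{\langle\phi_c\rangle\}$ are thus the image under $(r,s)\mapsto\langle\phi_{r,s}\rangle$ of a set of rational points of $C_3$, so it remains only to prove that $c\mapsto(r(c),s(c))$ has image exactly $C_3(\Q)$.

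This last step is the real work. I would compute the multipliers $\phi_c'(x_i)$ at the three fixed points $x_i$ (the roots of $x^3-x^2+(c+1)x-c$) by applying Newton's identities to the reversed polynomial whose roots are the $1/x_i$, producing $\sigma_1=r(c)$ and $\sigma_2=s(c)$ as explicit rational functions of $c$. Comparing with the parametrization of Proposition \ref{C2C3C4_eqns} through \eqref{etakappa}, I would then verify the clean relations $r(c)=\varepsilon(1/c)$ and $s(c)=\rho(1/c)$; for instance $\varepsilon(1/c)=-c\bigl((1/c)^3-(1/c)^2+5/c+1\bigr)$ collapses to $(-c^3-5c^2+c-1)/c^2$, matching $\sigma_1$. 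Since $c\mapsto 1/c$ is a bijection of $\Q\setminus\{0\}$ onto itself, the image of $c\mapsto(r(c),s(c))$ is $\{(\varepsilon(v),\rho(v)):v\in\Q\setminus\{0\}\}=C_3(\Q)$, which closes the argument. The one genuine obstacle is the symmetric-function computation of $\sigma_2$, which is lengthy by hand and is best carried out with the computer algebra already used throughout the paper.
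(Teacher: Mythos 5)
Your proposal is correct, and for most of its length it runs parallel to the paper's proof: the inclusion $\{\langle\phi_c\rangle\}\subseteq\langle C_3\rangle$ and the ``moreover'' clause are obtained exactly as in the paper, by exhibiting the critical $3$-cycle $0\mapsto\infty\mapsto 1$ and the second critical point $2c/(c+1)$ (resp.\ $\infty$ when $c=-1$), and the hard inclusion is reduced, as in the paper, to the combination of Proposition \ref{period234_autos}\eqref{C3meetS} (triviality of automorphisms), Lemma \ref{manes-yasufuku_lem}, and the parametrization in Proposition \ref{C2C3C4_eqns}. The genuine divergence is the final matching step. The paper travels from $\phi_{r,s}$ to the normal form: for $(r,s)=(\varepsilon(v),\rho(v))$ it exhibits the explicit rational $3$-periodic critical point $p=-(v^2+3)/(v+1)$ of $\phi_{r,s}$ and conjugates over $\Q$ so that the critical cycle becomes $0\mapsto\infty\mapsto 1$ with $0$ critical, landing precisely on $\phi_c$ with $c=(v^2+1)/(v^3+v)=1/v$; this uses Lemma \ref{manes-yasufuku_lem} only once and requires no multiplier computation. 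You travel in the opposite direction: you apply Lemma \ref{manes-yasufuku_lem} a second time, to $\phi_c$ itself (legitimately, since $C_3\cap\calS$ has no point with rational coordinates, so $\Aut(\phi_c)$ is trivial), and you compute the Milnor coordinates of $[\phi_c]$, concluding that $c\mapsto(r(c),s(c))$ surjects onto $C_3(\Q)$. Your key computational claim is right: writing $y_i$ for the reciprocals of the roots of the fixed-point cubic $x^3-x^2+(c+1)x-c$ and using Newton's identities, one finds $\sigma_1=(-c^3-5c^2+c-1)/c^2$ and $\sigma_2=(2c^3+7c^2-2c+1)/c^2$, which are exactly $\varepsilon(1/c)$ and $\rho(1/c)$; reassuringly, both proofs arrive at the same correspondence $v=1/c$. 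What the paper's route buys is an explicit conjugation and a lighter calculation (one only verifies that $p$ is critical and $3$-periodic); what your route buys is that one never has to locate a rational point of the critical cycle of $\phi_{r,s}$ or construct the conjugating map, at the price of the $\sigma_2$ computation, which, as you note, is best delegated to computer algebra in keeping with the paper's style.
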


\begin{proof} Suppose that $\phi\in\Rat_2(\Q)$ has a 3-periodic critical point, so that $\langle\phi\rangle\in\langle C_3\rangle$. Since $\Q_{\phi}=\Q$, Lemma \ref{period234_autos}\eqref{C3meetS} implies that $\Aut(\phi)$ is trivial. Letting $(r,s)\in C_3(\Q)$ be the coordinates of $[\phi]$, we have $\langle\phi\rangle=\langle\phi_{r,s}\rangle$ by Lemma \ref{manes-yasufuku_lem}. Proposition \ref{C2C3C4_eqns} implies that $(r,s)=(\varepsilon(v),\rho(v))$ for some nonzero $v\in\Q\setminus\{0\}$. The point $p:=-(v^2+3)/(v+1)$ is then a (rational) 3-periodic critical point of $\phi_{r,s}$. It follows that we can conjugate $\phi_{r,s}$ by an element of $\PGL_2(\Q)$ so that the critical 3-cycle becomes $0\mapsto\infty\mapsto 1$, with $0$ as critical point. We thus obtain the map $\phi_c$ with $c=(v^2+1)/(v^3+v)$. Since $\langle\phi\rangle=\langle\phi_{r,s}\rangle=\langle\phi_c\rangle$, this proves the inclusion $\langle C_3\rangle\subseteq\{\langle\phi_c\rangle:c\in\Q\setminus\{0\}\}$. 

In order to prove the reverse inclusion it suffices to prove the second statement of the proposition. The statement holds if $c=\pm1$, as the critical points of $\phi_c$ are then $0$ and $1$, or $0$ and $\infty$. Assuming now that $c\in\Q\setminus\{0,\pm1\}$, the critical points of $\phi_c$ are $0$ and $2c/(c+1)$, and $0$ is $3$-periodic for $\phi_c$.
\end{proof}

\begin{prop}\label{parametrization4}
Defining maps $\phi_c\in\Q(x)$ for $c\in\Q\setminus\{0,\pm 1\}$ by
\begin{equation}\label{C4_parametrized}
    \phi_c=\frac{(c + c^2 - c^3)x - c^2}{(c^3 - c^2 - c + 1)x^2 - (c^3 - c^2 - c)x - c^2},
\end{equation}
we have $\langle C_4\rangle=\{\langle\phi_c\rangle:c\in\Q\setminus\{0,\pm 1\}\}$. Moreover, every map $\phi_c$ has exactly one $4$-periodic critical point, namely $0$.
\end{prop}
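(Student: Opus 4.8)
The plan is to follow the same strategy used for $n=3$ in Proposition \ref{parametrization3}, proving the two inclusions $\langle C_4\rangle\subseteq\{\langle\phi_c\rangle\}$ and $\{\langle\phi_c\rangle\}\subseteq\langle C_4\rangle$ separately, and establishing the statement about critical points along the way.

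For the inclusion $\langle C_4\rangle\subseteq\{\langle\phi_c\rangle:c\in\Q\setminus\{0,\pm 1\}\}$, let $\phi\in\Rat_2(\Q)$ have a $4$-periodic critical point, so that $\langle\phi\rangle\in\langle C_4\rangle$ and $[\phi]\in C_4$. First I would show $\Aut(\phi)$ is trivial: since $\phi\in\Q(x)$ we have $\Q_{\phi}=\Q$, whereas if $\Aut(\phi)$ were nontrivial then $[\phi]\in C_4\cap\calS$ and Proposition \ref{period234_autos}\eqref{C4meetS} would force $\Q_{\phi}$ to have absolute degree two or six, a contradiction. With $\Aut(\phi)$ trivial, Lemma \ref{manes-yasufuku_lem} gives rational coordinates $(r,s)$ for $[\phi]$ together with $\langle\phi\rangle=\langle\phi_{r,s}\rangle$, and Proposition \ref{C2C3C4_eqns} provides $v\in\Q\setminus\{0,\pm 1\}$ with $(r,s)=(\eta(v),\kappa(v))$. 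Next I would exhibit an explicit rational $4$-periodic critical point of $\phi_{r,s}$ as a rational function of $v$ (the analog of the point $p=-(v^2+3)/(v+1)$ appearing in the case $n=3$), and then conjugate $\phi_{r,s}$ by the element of $\PGL_2(\Q)$ carrying its critical $4$-cycle to $0\mapsto 1\mapsto c\mapsto w\mapsto 0$. A direct computation identifies the resulting map with $\phi_c$ from \eqref{C4_parametrized} for an explicit $c=c(v)\in\Q$; since the conjugate map has degree two, necessarily $c\notin\{0,\pm 1\}$, and $\langle\phi\rangle=\langle\phi_{r,s}\rangle=\langle\phi_c\rangle$.

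For the reverse inclusion and the statement about critical points, I would work directly with \eqref{C4_parametrized}. Writing $w=c/(1+c-c^2)$, a short computation shows $\phi_c(0)=1$, $\phi_c(1)=c$, $\phi_c(c)=w$, and $\phi_c(w)=0$, so $0$ lies on the $4$-cycle $0\mapsto 1\mapsto c\mapsto w\mapsto 0$; since also $\phi_c'(0)=0$, the point $0$ is a $4$-periodic critical point, hence $[\phi_c]\in C_4$ and $\langle\phi_c\rangle\in\langle C_4\rangle$. For uniqueness, the critical points of $\phi_c$ are the roots of $p'q-pq'$, where $\phi_c=p/q$; one checks this quadratic factors as a nonzero constant times $x(x-\gamma)$ with $\gamma=2c/(1+c-c^2)$, so $\gamma$ is the only other critical point. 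It then remains to verify that $\gamma$ is not $4$-periodic for any $c\in\Q\setminus\{0,\pm 1\}$, which follows by computing the forward orbit of $\gamma$ and checking that the identity $\phi_c^4(\gamma)=\gamma$ fails identically in $c$, the finitely many values of $c$ solving it being either irrational or in the excluded set.

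The main obstacle is the second step of the forward inclusion: producing the rational $4$-periodic critical point of $\phi_{\eta(v),\kappa(v)}$ together with the explicit conjugating transformation, and then simplifying the conjugate to the clean form \eqref{C4_parametrized}. Because the parametrization $(\eta,\kappa)$ and the $4$-cycle structure are considerably more involved than in the case $n=3$, this is where the genuine computational work lies; the remaining ingredients --- the automorphism reduction, the application of Lemma \ref{manes-yasufuku_lem}, and the verification that $\gamma$ is never $4$-periodic --- are either immediate or routine finite computations best carried out symbolically.
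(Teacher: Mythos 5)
Your proposal is correct and takes essentially the same approach as the paper's proof: triviality of $\Aut(\phi)$ via Proposition \ref{period234_autos}\eqref{C4meetS}, then Lemma \ref{manes-yasufuku_lem} and the parametrization $(r,s)=(\eta(v),\kappa(v))$ from Proposition \ref{C2C3C4_eqns}, then an explicit rational $4$-periodic critical point of $\phi_{r,s}$ conjugated to the normal form \eqref{C4_parametrized}, with the reverse inclusion handled by checking that $0$ is a $4$-periodic critical point and the other critical point $2c/(1+c-c^2)=-2c/(c^2-c-1)$ is never $4$-periodic. The paper supplies exactly the data you deferred: the critical point is $p=(v^4-v^2+2v+3)/(v^3-2v-1)$ and the conjugation carries the critical cycle to $0\mapsto 1\mapsto -v\mapsto\bullet$, so that $c=-v$.
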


\begin{proof}
Suppose that $\phi\in\Rat_2(\Q)$ has a $4$-periodic critical point, so that $\langle\phi\rangle\in\langle C_4\rangle$. Then $\Aut(\phi)$ is trivial by Lemma \ref{period234_autos}\eqref{C4meetS}. Letting $(r,s)\in C_4(\Q)$ be the coordinates of $[\phi]$, we have $\langle\phi\rangle=\langle\phi_{r,s}\rangle$ by Lemma \ref{manes-yasufuku_lem}; moreover, Proposition \ref{C2C3C4_eqns} implies that $(r,s)=(\eta(v),\kappa(v))$ for some $v\in\Q\setminus\{0,\pm1\}$. 
The point $p:=(v^4-v^2+2v+3)/(v^3-2v-1)$ is a $4$-periodic critical point of $\phi_{r,s}$, so we can conjugate $\phi_{r,s}$ over $\Q$ in order to have critical cycle $0\mapsto1\mapsto-v\mapsto\bullet$, with $0$ as critical point, thus obtaining the map $\phi_c$ with $c=-v$. Since ${\langle\phi\rangle=\langle\phi_c\rangle}$, this proves that $\langle C_4\rangle\subseteq\{\langle\phi_c\rangle:c\in\Q\setminus\{0,\pm1\}\}$. The reverse inclusion follows from the second statement of the proposition, which in turn is proved by a simple calculation: the critical points of any map $\phi_c$ are $0$ and $-2c/(c^2 - c - 1)$, and the latter is not $4$-periodic for $\phi_c$.
\end{proof}
\begin{rem}
    The parametrization of $\langle C_4\rangle$ given in Proposition \ref{parametrization4} was chosen to have the property that $\infty$ is not periodic for the maps $\phi_c$, as this is helpful for some of the computations in Section \ref{period4_section}. However, disregarding this requirement, a simpler parametrization is given by maps of the form $\phi_c=[cx^2+(c^2-3c+1)x-(2c^2-3c+1)]/(cx^2)$ with $c\in\Q\setminus\{0,1/2,1\}$.
\end{rem}

\section{Portraits with a critical cycle of length $2$ or $3$}\label{period2_3_section}
In this section we prove Theorem \ref{cv_extensions}, which concerns the portraits of quadratic rational maps with a 2-periodic or 3-periodic critical point. As noted earlier, part \eqref{3_nontrivial} of the theorem follows immediately from Proposition \ref{period234_autos}\eqref{C3meetS}. Propositions \ref{period3_portraits}, \ref{period2_trivial_portraits}, and \ref{period2_autos_portraits} below prove parts \eqref{3_trivial}, \eqref{2_trivial}, and \eqref{2_nontrivial} of the theorem, respectively.

\begin{prop}\label{period3_portraits}
Suppose that $\phi\in\Rat_2(\Q)$ has a $3$-periodic critical point in $\PP^1(\Qbar)$. Then $\phi$ has such a point defined over $\Q$. In particular, under the assumptions of Theorem 1 in \cite{vishkautsan}, the portrait $G(\phi,\Q)$ must be one of the six referred to in the theorem.
\end{prop}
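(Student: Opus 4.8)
The plan is to reduce the entire statement to the explicit parametrization of $\langle C_3\rangle$ already obtained in Proposition \ref{parametrization3}. The substantive claim is the first sentence — that a quadratic map with a $3$-periodic critical point over $\Qbar$ must in fact possess one defined over $\Q$ — after which the classification follows immediately by invoking Theorem 1 of \cite{vishkautsan}.

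First I would observe that the hypothesis on $\phi$ says precisely that $\langle\phi\rangle\in\langle C_3\rangle$. By Proposition \ref{parametrization3} we therefore have $\langle\phi\rangle=\langle\phi_c\rangle$ for some $c\in\Q\setminus\{0\}$, where $\phi_c=1-(c+1)/x+c/x^2$. By definition of the arithmetic dynamical system, this means there exists $\sigma\in\Rat_1(\Q)$ with $\phi=\sigma\circ\phi_c\circ\sigma^{-1}$.

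Next I would transport a rational critical point across this conjugacy. Proposition \ref{parametrization3} guarantees that $\phi_c$ has a $3$-periodic critical point $P_0\in\PP^1(\Q)$ (indeed $P_0=0$). Because $\sigma$, viewed as an element of $\PGL_2(\Q)$, carries $\PP^1(\Q)$ to itself, the point $\sigma(P_0)$ again lies in $\PP^1(\Q)$; and since conjugation by a linear fractional map preserves ramification and commutes with iteration, $\sigma(P_0)$ is a critical point of $\phi$ satisfying $\phi^3(\sigma(P_0))=\sigma(\phi_c^3(P_0))=\sigma(P_0)$, of exact period three. This produces the desired rational $3$-periodic critical point of $\phi$ and establishes the first assertion.

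Finally, the \emph{in particular} statement becomes a matter of matching hypotheses: Theorem 1 of \cite{vishkautsan} sorts the portraits $G(\phi,\Q)$ of quadratic maps possessing a rational $3$-periodic critical point into six possibilities, and we have just verified that every $\phi$ meeting the hypothesis of the present proposition satisfies this condition. I do not anticipate any serious obstacle, as Proposition \ref{parametrization3} has already done the essential work; the only point requiring care is to confirm that no auxiliary hypothesis of \cite{vishkautsan} beyond the rationality of the critical $3$-cycle goes unverified — which is exactly the gap this proposition is designed to close.
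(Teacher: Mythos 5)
Your proposal is correct and follows essentially the same route as the paper: the paper's proof likewise deduces from Proposition \ref{parametrization3} that $\phi$ has a rational $3$-periodic critical point and then cites Theorem 1 of \cite{vishkautsan}. The only difference is that you spell out the (implicit) transport of the rational critical point $0$ of $\phi_c$ through the $\PGL_2(\Q)$-conjugacy, which the paper leaves to the reader.
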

\begin{proof}
By Proposition \ref{parametrization3}, $\phi$ must have a rational $3$-periodic critical point. The result now follows from  \cite{vishkautsan}*{Theorem 1}.
\end{proof}

\begin{prop}\label{period2_trivial_portraits}
 Suppose that $\phi\in\Rat_2(\Q)$ has a $2$-periodic critical point in $\PP^1(\Qbar)$, and that $\Aut(\phi)$ is trivial. Then $\phi$ has a unique $2$-periodic critical point, and this point is defined over $\Q$. In particular, under the assumptions of Theorem 1.2 in \cite{canci-vishkautsan}, the portrait $G(\phi,\Q)$ must be one of the 13 referred to in the theorem.
\end{prop}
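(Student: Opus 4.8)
The plan is to reduce the statement to the parametrization already established in Proposition~\ref{parametrization2}. By that proposition, every map $\phi\in\Rat_2(\Q)$ with a $2$-periodic critical point and trivial automorphism group satisfies $\langle\phi\rangle=\langle\phi_c\rangle$ for some $c\in\Q\setminus\{0\}$, where $\phi_c=c(x-1)/x^2$. (The maps $\psi_c$ account precisely for the nontrivial-automorphism case, so they are excluded here.) Since the number of $2$-periodic critical points and their field of definition are invariants of the arithmetic dynamical system $\langle\phi\rangle$, it suffices to verify the uniqueness and rationality claims for the representative $\phi_c$.

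First I would invoke Proposition~\ref{parametrization2}\eqref{phi_v_critical}, which states that every $\phi_c$ has exactly one $2$-periodic critical point, namely $0$, and that its other critical point $2$ is not $2$-periodic. This immediately gives both that the $2$-periodic critical point is unique and that it is rational (being $0\in\PP^1(\Q)$). Because linear conjugacy over $\Q$ carries critical points to critical points and preserves periods and fields of definition, the same conclusions transfer back to the original map $\phi$: it has a unique $2$-periodic critical point, defined over $\Q$.

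With this established, the portrait statement follows by citing the relevant external result. Having a rational $2$-periodic critical point places $\phi$ in the family studied by Canci and Vishkautsan, so under the hypotheses of \cite{canci-vishkautsan}*{Theorem 1.2} the portrait $G(\phi,\Q)$ must be one of the $13$ listed there. I would conclude by simply quoting that theorem.

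I expect no real obstacle here: the entire content has effectively been front-loaded into Proposition~\ref{parametrization2}, and the proof amounts to observing that uniqueness and rationality of the critical point are conjugacy invariants, then reading off the answer for the normal form $\phi_c$. The only point requiring any care is confirming that we are genuinely in the trivial-automorphism branch of the parametrization (so that $\phi_c$, and not $\psi_c$, is the correct representative); this is guaranteed by the hypothesis that $\Aut(\phi)$ is trivial together with the disjoint-union structure in Proposition~\ref{parametrization2}\eqref{C2partition}.
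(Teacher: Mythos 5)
Your proposal is correct and follows exactly the paper's argument: the paper's own proof simply states that the first claim follows from Proposition \ref{parametrization2} and the second from \cite{canci-vishkautsan}*{Theorem 1.2}, and your write-up fills in the same chain of reasoning (the disjoint-union/trivial-automorphism branch of part \eqref{C2partition}, the explicit critical point $0$ from part \eqref{phi_v_critical}, and transfer along $\Q$-conjugacy). No gaps; you have merely made explicit the details the paper leaves to the reader.
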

\begin{proof}
The first statement follows from Proposition \ref{parametrization2}, and the second follows from \cite{canci-vishkautsan}*{Theorem 1.2}.
\end{proof}

\begin{prop}\label{period2_autos_portraits}
Suppose $\phi\in\Rat_2(\Q)$ has nontrivial automorphism group and a $2$-periodic critical point. Then $\phi$ has exactly two such points, $p$ and $\phi(p)$, and these points generate a number field $K$ of absolute degree at most two. Moreover, the portrait $G(\phi,K)$ must be one of the four shown in Figure \ref{period2portraits}. Conversely, every portrait in the figure is realizable in the form $G(\phi,K)$.
\end{prop}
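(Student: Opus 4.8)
The plan is to use the explicit parametrization of $\langle C_2\rangle$ from Proposition \ref{parametrization2}. Since $\Aut(\phi)$ is nontrivial and $\phi$ has a $2$-periodic critical point, we have $\langle\phi\rangle\in\langle C_2\rangle$, and Proposition \ref{parametrization2}\eqref{C2partition} forces $\langle\phi\rangle=\langle\psi_c\rangle$ for some $c\in\Q\setminus\{0,4\}$. Thus, up to conjugacy over $\Q$, we may replace $\phi$ by $\psi_c=(2x-1)/(cx^2-1)$. Part \eqref{psi_v_critical} of that proposition already tells us that $\psi_c$ has exactly two $2$-periodic critical points, $\alpha$ and $1-\alpha$, forming a single $2$-cycle; these generate the field $K=\Q(\sqrt{c(c-4)})$, which has absolute degree at most two over $\Q$. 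This disposes of the first two sentences of the statement directly.

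The substantive part is the classification of $G(\psi_c,K)$ into the four portraits of Figure \ref{period2portraits}. First I would record the ``backbone'' of the graph that is present for every $c$: the critical $2$-cycle $\{\alpha,1-\alpha\}$ together with the structure forced by the automorphism. Since $\Aut(\psi_c)$ is nontrivial, there is a nonidentity $\sigma\in\Rat_1(\bar k)$ commuting with $\psi_c$; I would compute $\sigma$ explicitly (it should be an involution, likely $x\mapsto 1-x$ or similar, fixing or swapping the critical points), because $\sigma$ induces a graph automorphism of $\PrePer(\psi_c,K)$ that pairs up pre-periodic points and severely constrains which portraits can occur. The idea is that the symmetry collapses the combinatorial possibilities to a short list, and the remaining freedom is governed by how many $K$-rational preimages the cycle points acquire under the parameter $c$.

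Next I would determine the finitely many extra $K$-rational pre-periodic points as algebraic conditions on $c$. Concretely, I would solve for $K$-rational fixed points of $\psi_c$, for $K$-rational solutions of $\psi_c(x)=\alpha$ and $\psi_c(x)=1-\alpha$ (the immediate preimages of the critical cycle that are not already on it), and so on up the pre-periodic tree; each such condition is the rationality of a root of an explicit low-degree polynomial in $x$ whose coefficients lie in $\Q[c]$ or $K[c]$. The four portraits in Figure \ref{period2portraits} should correspond to the possible combinations of which of these auxiliary points are $K$-rational, i.e.\ to a partition of the $c$-line according to whether certain discriminants are squares in $K$. Here I would invoke the assumption (implicit in this family) that no rational or $K$-rational periodic point of large period occurs, or verify directly that periodic points of period $>2$ cannot appear by a resultant computation, so that the tree has bounded depth.

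Finally, for the converse I would exhibit, for each of the four portraits, an explicit value of $c$ (or a one-parameter subfamily) realizing it, checking by direct computation that $G(\psi_c,K)$ is the claimed graph. The remark that Appendix \ref{realization_appendix} contains a lemma ``used to realize every portrait in Figure \ref{period2portraits}'' signals that the realizability step is the genuine obstacle: one must ensure each portrait actually occurs over some admissible $K$, which may require producing $c\in\Q$ making the relevant discriminant conditions hold simultaneously, or passing to a quadratic $K$ when $\alpha\notin\Q$. I expect the main difficulty to lie precisely in this realization step --- controlling the interaction between the quadratic field $K=\Q(\alpha)$ and the separate square conditions that switch the auxiliary preimages on and off, so as to guarantee that all four combinatorial types are attained and that no spurious fifth portrait arises.
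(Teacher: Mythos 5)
Your reduction to $\psi_c=(2x-1)/(cx^2-1)$ via Proposition \ref{parametrization2} and your treatment of the first two sentences coincide with the paper's. The gap is in the classification step. The proposition is \emph{unconditional}, but your plan hinges on ``invoking the assumption (implicit in this family)'' that no $K$-rational periodic point of period greater than $2$ occurs --- no such assumption appears in the statement, so it must be proved, and it must be proved for \emph{every} quadratic field $K=\Q(\alpha)$ that arises, not just over $\Q$. A resultant or dynatomic computation over $\Q$ therefore does not suffice: you would need to control the quadratic points of the relevant dynatomic curves for all admissible $K$ simultaneously. Moreover, even granting bounded periods, your sketch leaves the depth of the pre-periodic tree (``and so on up the pre-periodic tree'') entirely open; bounding how many successive $K$-rational preimages can occur is exactly the kind of uniform-boundedness difficulty that, for a general family, requires the modular-curve machinery of Sections \ref{dmc_section}--\ref{period4_section}. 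As written, your argument can exhibit portraits but cannot certify that the list of four is complete.

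The paper sidesteps all of this with one structural observation your proposal misses: over $K$ one may conjugate $\psi_c$ so that the critical $2$-cycle becomes $0\mapsto\infty$, which puts the map in the form $\widetilde\psi_c=(c\alpha-1)/x^2$. For a map $a/x^2$ the full set of pre-periodic points over $\Qbar$ is explicit: fixed points and $2$-periodic points other than $0,\infty$ both satisfy $x^3=a$, and every pre-periodic point other than $0,\infty$ has the form $\zeta a^{1/3}$ with $\zeta$ a root of unity. Hence there is a clean dichotomy: either $c\alpha-1$ has no cube root in $K$, in which case the portrait is exactly the $2$-cycle (the first portrait of Figure \ref{period2portraits}); or $c\alpha-1=\omega^3$ with $\omega\in K$, in which case $\widetilde\psi_c$ is conjugate over $K$ to $1/x^2$ via $x\mapsto x/\omega$, and the portraits of $1/x^2$ over fields of absolute degree at most two are already classified in \cite{lukas-manes-yap}*{Section 5.2}, giving precisely the remaining three portraits. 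This is what makes the result unconditional, with no Chabauty-type computation and no tree-depth analysis. Your final realization step (explicit values of $c$ for each portrait) does match the paper's, which obtains $c=81/8$, $2$, $400/343$ from Lemma \ref{period2_realizations}.
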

\begin{proof}
By Proposition \ref{parametrization2}, it suffices to prove the result when $\phi$ is a map of the form $\psi_c=(2x-1)/(cx^2-1)$. Recall that $\psi_c$ has two critical points, $\alpha$ and $1-\alpha$. Hence we can conjugate $\psi_c$ over $K=\Q(\alpha)$ in order to have critical cycle $0\mapsto\infty$, obtaining the map
\[\widetilde\psi_c=\frac{\alpha}{1-\alpha}\cdot\frac{1}{x^2}=\frac{c\alpha-1}{x^2}.\]
To determine the possible portraits of this map (which has the same portrait as $\phi$ over $K$), we consider two cases.
    
Suppose first that $c\alpha-1$ does not have a cube root in $\Q(\alpha)$. Then $\widetilde\psi_c$ has no pre-periodic points other than $0$ and $\infty$, since every such point is of the form $\zeta(c\alpha-1)^{1/3}$ for some root of unity $\zeta$. Hence, the portrait $G(\widetilde\psi_c,\Q(\alpha))$ must be the portrait on the top left of Figure \ref{period2portraits}.

Now suppose that $c\alpha-1$ has a cube root $\omega\in\Q(\alpha)$. In this case, $\widetilde\psi_c$ is conjugate to $1/x^2$ via the map $x\mapsto x/\omega$, so the portrait of $\widetilde\psi_c$ is the same as the portrait of $1/x^2$ over $\Q(\alpha)$. Since $\Q(\alpha)$ is either $\Q$ or a quadratic number field, the analysis in \cite{lukas-manes-yap}*{Section 5.2} shows that there are three possibilities for this portrait, namely the remaining three in Figure \ref{period2portraits}.

Finally, we realize each portrait from the figure in the form $G(\psi_c,\Q(\alpha))$. The first portrait shows the typical case, when $c\alpha-1$ is not a cube in $\Q(\alpha)$. Choosing $c=81/8,2,$ and $400/343$, respectively, we obtain the second, third, and fourth portraits in the figure. The chosen values of $c$ are obtained by setting $q=2$, $p=1$, and $p=7/9$ in the following lemma, a proof of which is given in Appendix \ref{realization_appendix}.

\begin{lem}\label{period2_realizations}
With notation as in Proposition \ref{parametrization2}\eqref{psi_v_critical}, the following hold.
\begin{enumerate}[(a)]
    \item If $c=(q^3+1)^2/q^3$ for some $q\in\Q\setminus\{0,\pm 1\}$, then $\alpha$ is rational and $c\alpha-1$ has a rational cube root. The converse holds as well.
    \item If $\alpha$ is irrational and $c\alpha-1$ has a cube root in $\Q(\alpha)$, then there exists $p\in\Q\setminus\{0,\pm 1,1/2,1/3\}$ such that $c=(4p^3-3p+1)/p^3$. The converse holds if and only if $(p+1)(1-3p)$ is not a rational square.\qedhere
\end{enumerate}
\end{lem}
\end{proof}

\begin{figure}
\centering
\includegraphics[scale=0.45]{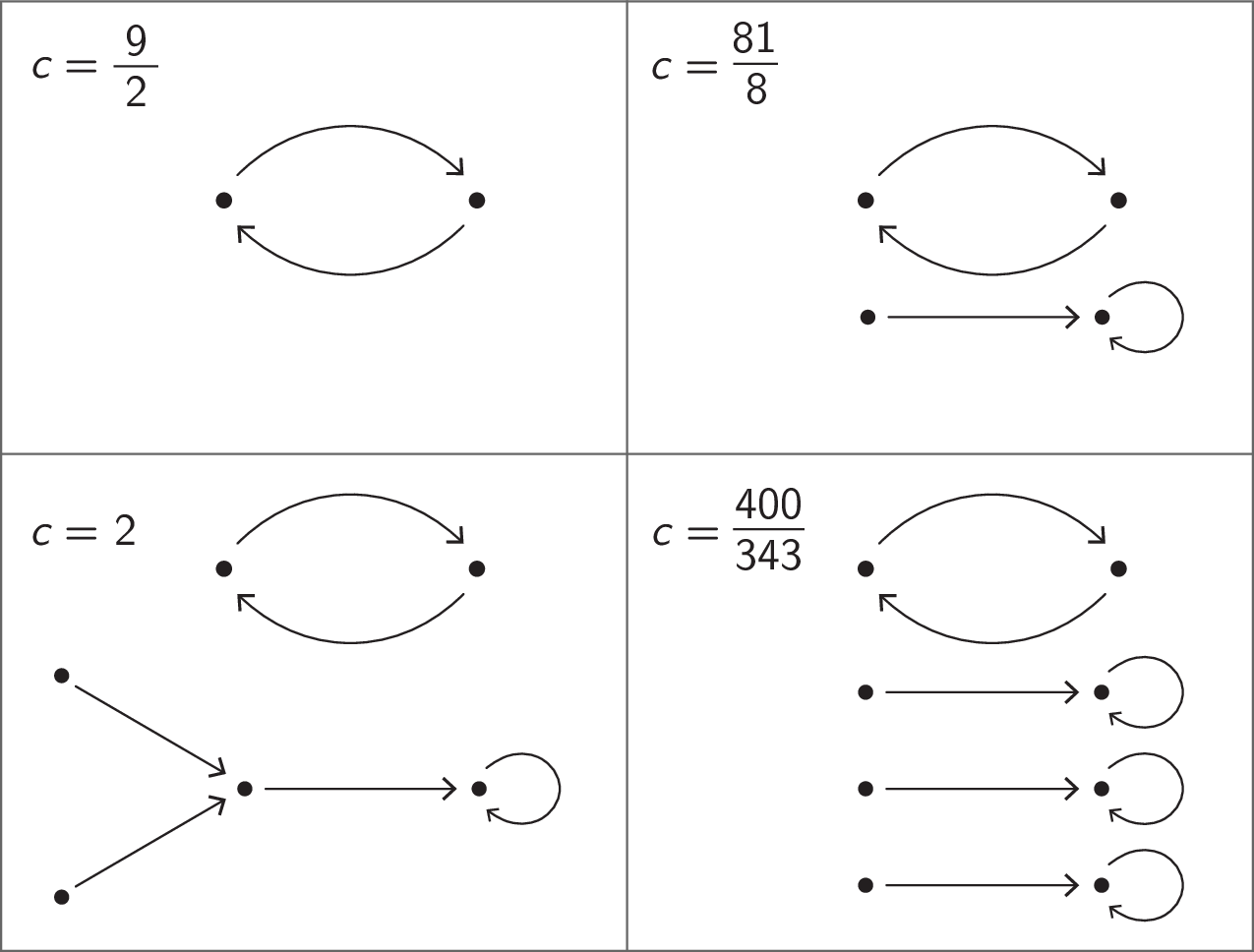}
\caption{Possibilities for the portrait of $\psi_c=\frac{2x-1}{cx^2-1}$ over the field generated by its critical points.}
\label{period2portraits}
\end{figure}

\section{Dynamical modular curves}\label{dmc_section}
In this section we define several types of algebraic curves that will be used in our analysis of the dynamics of the maps $\phi_c$ in \eqref{C4_parametrized}. A similar analysis could, in principle, be applied to study the dynamics of any family of maps $\phi_c:=\phi(c,x)$, where $\phi\in\Q(t)(x)$ is a nonconstant rational function with coefficients in a rational function field over $\Q$.

All the curves used here will be defined in terms of the following construction. Let $t$ and $x$ be indeterminates, and $f(t,x)\in\Q(t)(x)$ a nonconstant rational function in $x$ with coefficients in the field $\Q(t)$. Let $p\in\Q(t)$ be the content of $f$, so that we may express $f$ uniquely in the form
\begin{equation}\label{normalization}
    f(t,x)=p(t)\cdot\frac{A(t,x)}{B(t,x)},
\end{equation} where $A,B\in\Q[t][x]$ are coprime polynomials, each with content equal to 1. If $A$ is nonconstant, we denote by $Z_f$ the affine plane curve defined by the equation $A(t,x)=0$. The \textbf{exceptional set} of $f$, denoted $\calE(f)$, is defined to be the finite subset of $\Qbar$ consisting of the zeros and poles of $p$, and the $t$-coordinates of all algebraic points on the subscheme of $\A^2=\Spec\Q[t,x]$ defined by $A(t,x)$ and $B(t,x)$. We then have the following equivalence for any point $(t_0,x_0)\in\A^2(\Qbar)$ with $t_0\notin\calE(f)$:
\begin{equation}\label{Zf_ppty}
    f(t_0,x_0)=0\Longleftrightarrow(t_0,x_0)\in Z_f(\Qbar).
\end{equation}

It follows, in particular, that if the curve $Z_f$ has only finitely many \emph{rational} points, all of which are explicitly known, then all rational solutions of the equation $f(t_0,x_0)=0$ can be effectively determined. Note that if $f\in\Q(t)[x]$ is a polynomial, the representation \eqref{normalization} will have $B=1$.

Now fix a nonconstant rational function $\phi\in\Q(t)(x)$ giving rise to a finite morphism $\phi:\PP^1_{\Q(t)}\to\PP^1_{\Q(t)}$. By base change, we may regard $\phi$ as a map $\PP^1_{\Qbar(t)}\to\PP^1_{\Qbar(t)}$. For all but finitely many $c\in\Qbar$, a process of reduction modulo the ideal $(t-c)$ yields a specialized map $\phi_c:\PP^1\to\PP^1$ which is defined over the number field $\Q(c)$ and has the same degree as $\phi$; see \cite{silverman_book}*{\textsection\textsection 2.3-2.5}. Explicitly, write $\phi(t,x)=p(t)\cdot\frac{A(t,x)}{B(t,x)}$ as in \eqref{normalization}. For the purposes of this article, we define the \textbf{bad set} of $\phi$, denoted $\calB(\phi)$, to be the finite set of elements of $\overline\Q$ that are either zeros or poles of $p$; roots of the leading coefficients of $A$ or $B$; or roots of the resultant $\Res(A,B)\in\Q[t]$. Then, for $c\in \Qbar\setminus\calB(\phi)$, the morphism $\phi_c:\PP^1\to\PP^1$ induced by the rational function $\phi(c,x)$ has the same degree as $\phi$, and is defined over $\Q(c)$. Applying the $Z_f$-construction, we will now attach to $\phi$ various \emph{dynamical modular curves} whose point sets encode dynamical information about the maps $\phi_c$.

For every positive integer $n$, the $n$th \textbf{dynatomic curve} of $\phi$, denoted $Y_1(n)$ when $\phi$ is clear from context, is the curve $Z_f$ corresponding to the dynatomic polynomial $\Phi_n=\Phi_{n,\phi}\in\Q(t)[x]$. Assuming that $\Phi_n$ has nonzero discriminant $\Delta_n\in\Q(t)$, it follows from \eqref{Zf_ppty} that a generic point on $Y_1(n)$ corresponds to a pair $(c,\alpha)$ such that $\alpha$ is $n$-periodic for the map $\phi_c$. More explicitly, suppose that $c,\alpha\in\Qbar$ are such that $c\notin\calB(\phi)\cup\calE(\Phi_n)$ and $\alpha$ is $n$-periodic for the map $\phi_c$. Then $\Phi_n(c,\alpha)=0$ and thus $(c,\alpha)$ is a point on $Y_1(n)$. Conversely, if $(c,\alpha)\in Y_1(n)(\Qbar)$ is any point with $c\notin\calB(\phi)\cup\calE(\Phi_n)$ and such that $c$ is neither a zero nor a pole of $\Delta_n$, then $\alpha$ is $n$-periodic for $\phi_c$. Note that if $\infty$ is not $n$-periodic for any map $\phi_c$ with $c\in\Qbar$, then every pair $(c,P)$, where $c\notin\calB(\phi)\cup\calE(\Phi_n)$ and $P\in\PP^1(\Qbar)$ is an $n$-periodic point of $\phi_c$, represents a point on $Y_1(n)$; moreover, there is a rational map $\rho:Y_1(n)\to Y_1(n)$ given by $(t,x)\mapsto (t,\phi(t,x))$.

For positive integers $m$ and $n$, we define a \textbf{generalized dynatomic polynomial} of $\phi$ by the equation
\[\Phi_{m,n}:=(q_m/q_{m-1})^{D(n)}\cdot\frac{\Phi_n(\phi^m)}{\Phi_n(\phi^{m-1})},\]
where $D(n)$ is the degree of $\Phi_n$ and, for each index $i$, $q_i\in\Q(t)[x]$ is a denominator for the rational function $\phi^i$, so that $\phi^i=p_i/q_i$ with $p_i$ and $q_i$ being coprime polynomials. The polynomial $\Phi_{m,n}$ has the property that its set of roots includes all elements of $\overline{\Q(t)}$ having pre-periodic type $(m,n)$ for $\phi$ (see \cite{hutz}*{Theorem 1} and \cite{silverman_book}*{Exercise 4.11}), and the specialized polynomial $\Phi_{m,n}(c,x)$ has the same property relative to the map $\phi_c$. The curve $Z_f$ corresponding to $f=\Phi_{m,n}$ will be denoted $Y_1(m,n)$ and called a \textbf{generalized dynatomic curve} of $\phi$. 

Under mild assumptions, a generic point on $Y_1(m,n)$ corresponds to a pair $(c,\alpha)$ such that $\alpha$ has pre-periodic type $(m,n)$ for the map $\phi_c$. Indeed, if $c,\alpha\in\Qbar$ are such that $c\notin\calB(\phi)\cup\calE(\Phi_{m,n})$ and $\alpha$ has pre-periodic type $(m,n)$ for $\phi_c$, then $(c,\alpha)$ is a point on $Y_1(m,n)$. Conversely, suppose that $\Delta_n$ is nonzero; set $\lambda_i:=\Res_x(\Phi_{m,n},\Phi_{i,n})\in\Q(t)$ for $1\le i< m$; and suppose every $\lambda_i$ is nonzero. If $(c,\alpha)\in Y_1(m,n)(\Qbar)$ is any point satisfying all of the following:
\begin{itemize}
\item $c\notin\calB(\phi)\cup\calE(\Phi_{m,n})$;
\item $c$ is neither a zero nor a pole of $\Delta_n$; 
\item $c$ is neither a zero nor a pole of $\lambda_i$ for any $i<m$;
\end{itemize}
then $\alpha$ has pre-periodic type $(m,n)$ for $\phi_c$.

Note that, for every $1\le i<m$, there is a rational map $Y_1(m,n)\to Y_1(i,n)$ given by $(t,x)\mapsto(t,\phi^{m-i}(t,x))$.

To define the next collection of curves, let $n$ be a positive integer; we assume that $\Phi_n$ has nonzero discriminant and that $\infty$ is not $n$-periodic for $\phi$. The $n$th \textbf{trace polynomial} of $\phi$, denoted $T_{n,\phi}$, is the unique monic polynomial in $\Q(t)[x]$ whose $n$th power is given by
\[T_{n,\phi}(x)^n=\prod_{\Phi_{n}(\beta)=0}(x-\tau(\beta)),\] where $\tau:=\sum_{i=0}^{n-1}\phi^i\in\Q(t)(x)$. The curve $Z_f$ corresponding to $T_{n,\phi}$ will be denoted by $Y_{\tau}(n)$ and called the $n$th \textbf{trace curve} of $\phi$. The significance of this curve lies in the existence of a rational map $Y_1(n)\to Y_{\tau}(n)$ given by $(t,x)\mapsto(t,\tau(t,x))$, which can be useful, for instance, when attempting to compute the set of rational points on $Y_1(n)$.

Though it will not be needed here, we briefly mention a related construction. If the natural maps $\rho:Y_1(n)\to Y_1(n)$ and $Y_1(n)\to Y_{\tau}(n)$ are morphisms (e.g., if $\phi$ is a polynomial), then $\rho$ is an automorphism of order $n$ and we may consider the quotient $Y_0(n):=Y_1(n)/\langle\rho\rangle$. Moreover, the map $Y_1(n)\to Y_{\tau}(n)$ factors through $Y_0(n)$, and the induced map $Y_0(n)\to Y_{\tau}(n)$ can be shown to be an isomorphism if both $\Phi_{n,\phi}$ and $T_{n,\phi}$ are irreducible.

The following lemma can be used to compute the $n$th power of the trace polynomial $T_{n,\phi}$, specifically by applying the lemma to the data $K=\Q(t)$, $p=\Phi_n$, and $f(x)=\sum_{i=0}^{n-1}\phi^i(x)$. A proof of the lemma is omitted, as it is a straightforward calculation based on properties of resultants.
\begin{lem}\label{trace_lem}
Let $K$ be a field and $p\in K[x]$ a polynomial of degree $m\ge 1$ with roots $\alpha_1,\ldots,\alpha_m\in\bar K$ and leading coefficient $\ell$. Let $f=q/h\in K(x)$ be nonconstant, where $q,h\in K[x]$, and let $e=\max(\deg h,\deg q)-\deg h$. If $p$ and $h$ have no common root in $\bar K$, then
\[\prod_{i=1}^m(x-f(\alpha_i))=\frac{\Res_y(p(y),h(y)x-q(y))}{\Res(p,h)\cdot\ell^e}.\]
\end{lem}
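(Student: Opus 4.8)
The plan is to reduce the entire identity to the standard product expansion of the resultant: for a polynomial $p$ of degree $m$ with leading coefficient $\ell$ and roots $\alpha_1,\dots,\alpha_m$, and any polynomial $g$, one has $\Res_y(p(y),g(y))=\ell^{\deg_y g}\prod_{i=1}^m g(\alpha_i)$. The first step is to rewrite the left-hand side so that this formula becomes applicable. Because $p$ and $h$ share no root, $h(\alpha_i)\neq 0$ for every $i$, so each $f(\alpha_i)=q(\alpha_i)/h(\alpha_i)$ is defined and
\[x-f(\alpha_i)=\frac{h(\alpha_i)\,x-q(\alpha_i)}{h(\alpha_i)}.\]
Taking the product over $i$ gives $\prod_{i=1}^m\big(x-f(\alpha_i)\big)=\big(\prod_i(h(\alpha_i)x-q(\alpha_i))\big)\big/\big(\prod_i h(\alpha_i)\big)$, which reduces the lemma to evaluating these two products.

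Next I would apply the product formula to each factor, treating $x$ as an indeterminate so that all $y$-degrees are the generic (formal) ones. For the denominator, $\Res(p,h)=\ell^{\deg h}\prod_{i=1}^m h(\alpha_i)$, hence $\prod_i h(\alpha_i)=\Res(p,h)/\ell^{\deg h}$. For the numerator, set $g(y)=h(y)x-q(y)\in K[x][y]$; since $x$ is an indeterminate the leading $y$-coefficient of $g$ is nonzero, so $\deg_y g=\max(\deg h,\deg q)=:D$, and the product formula yields $\Res_y(p(y),h(y)x-q(y))=\ell^{D}\prod_{i=1}^m(h(\alpha_i)x-q(\alpha_i))$, i.e.\ $\prod_i(h(\alpha_i)x-q(\alpha_i))=\Res_y(p,hx-q)/\ell^{D}$.

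Finally I would combine the two evaluations and track the power of $\ell$:
\[\prod_{i=1}^m\big(x-f(\alpha_i)\big)=\frac{\Res_y(p,hx-q)/\ell^{D}}{\Res(p,h)/\ell^{\deg h}}=\frac{\Res_y(p,hx-q)}{\Res(p,h)}\cdot\ell^{\deg h-D}.\]
Since $D-\deg h=\max(\deg h,\deg q)-\deg h=e$, the exponent is $-e$, which is exactly the claimed identity. The no-common-root hypothesis is used twice: to guarantee $\Res(p,h)\neq 0$ (so the division is legitimate) and to ensure each $f(\alpha_i)$ is defined. The only point requiring genuine care --- and the step I expect to be mildly delicate rather than truly hard --- is the degree bookkeeping: one must treat $x$ as a free indeterminate so that $\deg_y(hx-q)$ equals $\max(\deg h,\deg q)$ even in the borderline case $\deg h=\deg q$, where the leading $y$-coefficient is the nonzero element $\ell_h x-\ell_q$ of $K[x]$. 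This is precisely what makes the formal and actual degrees agree and the product formula applicable.
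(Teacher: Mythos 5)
Your proof is correct, and it is exactly what the paper intends: the paper omits the proof entirely, calling it ``a straightforward calculation based on properties of resultants,'' and your argument---writing each factor as $\bigl(h(\alpha_i)x-q(\alpha_i)\bigr)/h(\alpha_i)$ and applying the product formula $\Res_y(p,g)=\ell^{\deg_y g}\prod_i g(\alpha_i)$ separately to numerator and denominator---is precisely that calculation. Your handling of the one delicate point, namely that treating $x$ as an indeterminate makes $\deg_y\bigl(h(y)x-q(y)\bigr)=\max(\deg h,\deg q)$ even when $\deg h=\deg q$ (leading $y$-coefficient $\ell_h x-\ell_q\neq 0$), is what makes the exponent of $\ell$ come out to exactly $-e$, so nothing is missing.
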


Finally, we define a family of \textbf{preimage curves} attached to the rational function $\phi$. For $m\ge 1$ and $P\in\PP^1(\Q(t))$ we define $Y(m,P)$ to be the curve $Z_f$ corresponding to the rational function
\[f=\begin{cases}
    \phi^m-P &\text{if }P\ne\infty,\\
    1/\phi^m & \text{if }P=\infty.
\end{cases}\] When convenient, we will refer to $Y(m,P)$ as ``the curve $\phi^m=P$." Note that the curves $Y(m,\infty)$ are not defined in the case where $\phi$ is a polynomial, as the equation $1/\phi^m=0$ does not then define a curve.

 For $c\in\Qbar$ and $P$ as above, let $P(c)\in\PP^1(\Qbar)$ be the reduction of $P$ modulo $(t-c)$ as defined in \cite{silverman_book}*{\textsection 2.3}; thus, $\infty(c)=\infty$, and $P(c)$ is the value of $P$ at $c$ if $P\in\Q(t)$. A generic point on $Y(m,P)$ then corresponds to a pair $(c,\alpha)$ such that $\phi_c^m(\alpha)=P(c)$. To verify this in the case $P\ne\infty$, suppose $c,\alpha\in\Qbar$ satisfy $c\notin\calB(\phi)\cup\calE(f)$ and $c$ is not a pole of $P$. Then the definitions imply that $(c,\alpha)\in Y(m,P)(\Qbar)\iff\phi_c^m(\alpha)=P(c)$.
 
 Note that, for $1\le i\le m$, there is a rational map $Y(m,P)\to Y(i,P)$ given by $(t,x)\mapsto(t,\phi^{m-i}(t,x)))$.

All of the dynamical modular curves defined above have a natural map onto $\A^1$, namely the projection onto the first coordinate, which we call the $\bm{c}$\textbf{-map}. These curves may thus be regarded as schemes over $\A^1$, and all the natural maps between these modular curves, such as the map $Y_1(n)\to Y_{\tau}(n)$, are morphisms of $\A^1$-schemes. Moreover, fiber products of such schemes can be used to describe the specialized maps $\phi_c$ having a combination of dynamical properties. For example, if $\phi_c$ has rational points of periods $m$ and $n$, then $c$ is the projection of a rational point on the curve $Y_1(m)\times_{\A^1}Y_1(n)$. Note that if $A_m(t,x)=0$ and $A_n(t,x)=0$ are the defining equations of $Y_1(m)$ and $Y_1(n)$, respectively, then a model for $Y_1(m)\times_{\A^1}Y_1(n)$ in $\A^3=\Spec\Q[t,x,z]$ is given by the pair of equations $A_m(t,x)=0$ and
$A_n(t,z)=0$.

Having defined all the required modular curves, we now apply these constructions to study the rational portraits of the maps $\phi_c$ in \eqref{C4_parametrized}.

\section{Portraits with a critical $4$-cycle}\label{period4_section}
In this section we prove Theorem \ref{main_thm}, the main result of this article, which concerns the portraits of quadratic rational maps with a 4-periodic critical point. As noted earlier, part \eqref{main_thm_nontrivial} of the theorem follows immediately from Proposition \ref{period234_autos}\eqref{C4meetS}. The first statement of part \eqref{main_thm_trivial} follows from Proposition \ref{parametrization4}, and the remainder is proved in Proposition \ref{period4_finiteness} below.

\subsection{Search for portraits} Before the proof of Theorem \ref{main_thm}, we summarize the results of a computation carried out in order to search for portraits of dynamical systems in $\langle C_4\rangle$. By Proposition \ref{parametrization4}, if $\langle\phi\rangle\in\langle C_4\rangle$, then $G(\phi,\Q)=G(\phi_c,\Q)$ for some $c\in\Q\setminus\{0,\pm 1\}$, where $\phi_c$ is defined by \eqref{C4_parametrized}. This suggests choosing a sample set of rational numbers $c$ and computing the portraits $G(\phi_c,\Q)$ using an algorithm of Hutz \cite{hutz} that is included in \textsc{Sage}. Recall that the \emph{height} of $c\in\Q$ is given by $H(c)=\max\{|a|,|b|\}$ if $c=a/b$ with $a,b\in\Z$ and $\gcd(a,b)=1$. 

\begin{prop}\label{portrait_search}
    For every rational number $c\notin\{0,\pm 1\}$ with $H(c)\le 500$, the portrait $G(\phi_c,\Q)$ is one of the five shown in Figure \ref{period4portraits_trivial}.
\end{prop}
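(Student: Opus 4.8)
The plan is to treat this as a finite, effective computation. The set of admissible parameters
\[
S=\{c\in\Q : H(c)\le 500,\ c\notin\{0,\pm 1\}\}
\]
is finite, so it suffices to compute the portrait $G(\phi_c,\Q)$ for each $c\in S$ and to verify in every case that the resulting digraph is isomorphic to one of the five shown in Figure \ref{period4portraits_trivial}. First I would enumerate $S$ by running over coprime pairs $(a,b)\in\Z^2$ with $b\ge 1$ and $\max\{|a|,|b|\}\le 500$, discarding the three excluded values; by Proposition \ref{parametrization4}, each remaining $c$ yields a genuine degree-two map $\phi_c\in\Q(x)$ via \eqref{C4_parametrized}, which moreover has $0$ as a $4$-periodic critical point. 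For each such $c$ I would then invoke the implementation due to Hutz \cite{hutz} in \textsc{Sage} to compute the full set $\PrePer(\phi_c,\Q)$ together with its edge structure, thereby obtaining $G(\phi_c,\Q)$, and finally test the output against the five candidate digraphs for isomorphism, recording that a match is always found.

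The one point genuinely requiring justification is the \emph{completeness} of the preperiodic-point computation: the algorithm must return provably all rational preperiodic points of $\phi_c$, not merely those lying below some ad hoc search bound. This is guaranteed by the standard two-step procedure underlying Hutz's method. One first reduces $\phi_c$ modulo several primes of good reduction; since a rational periodic point of period $n$ reduces to a periodic point whose period divides $n$ in a controlled way, combining the reduction data from several primes bounds the list of periods that can occur over $\Q$. One then applies an explicit comparison between the naive and canonical heights on $\PP^1$ — every preperiodic point has canonical height zero — to bound the height of any rational preperiodic point, reducing the remaining search to a finite, explicit box. Thus for each fixed $c$ the computed portrait is provably exhaustive.

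I expect the main obstacle to be computational rather than conceptual. The cardinality of $S$ is on the order of $10^5$, and for each parameter the good-reduction and height-bound steps must be executed and the resulting digraph tested for isomorphism against the five targets. While the work for any single $c$ is entirely routine, the bulk of the effort lies in organizing the sweep so that it terminates in a reasonable amount of time and in confirming that the five candidate portraits are pairwise non-isomorphic so that the classification of each output is unambiguous. The verification produces no portrait outside the five listed — which is precisely the assertion of the proposition — and the complete computation is recorded in the accompanying code \cite{code}.
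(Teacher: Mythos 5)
Your proposal matches the paper's approach exactly: the paper establishes Proposition \ref{portrait_search} by precisely this finite sweep, enumerating all $c\in\Q\setminus\{0,\pm 1\}$ with $H(c)\le 500$ and computing each portrait $G(\phi_c,\Q)$ with Hutz's provably complete preperiodic-point algorithm \cite{hutz} in \textsc{Sage}, with the computation recorded in \cite{code}. Your additional remarks on why the algorithm is exhaustive (good-reduction period bounds plus canonical-height bounds) correctly describe the guarantees the paper implicitly relies on.
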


Based on the computation described above (see also Remark \ref{period4_rem}), we conjecture that the portraits in Figure \ref{period4portraits_trivial} represent \emph{all} portraits of the form $G(\phi_c,\Q)$. Our main goal in this section is to prove a number of statements supporting this conjecture, the main result being Proposition \ref{period4_finiteness}.

\subsection{Proof of Theorem \ref{main_thm}}\label{main_thm_section}
Define $\phi\in\Q(t)(x)$ by
\begin{equation}\label{period4_functionfield}
    \phi=\frac{(t + t^2 - t^3)x - t^2}{(t^3 - t^2 - t + 1)x^2 - (t^3 - t^2 - t)x - t^2}.
\end{equation}
Computing the set $\calB(\phi)\cap\Q$ we obtain $\{0,\pm 1\}$; thus, for $c\in\Q\setminus\{0,\pm 1\}$, the specialization $\phi_c$ is the map \eqref{C4_parametrized}.

We claim that the portrait of $\phi$ over $\Q(t)$ contains a copy of the portrait labeled I1 in Figure \ref{period4portraits_trivial}. Defining $A,B,Q\in\Q(t)$ by
\begin{equation}\label{ABQ_def}
    A:=1/(1-t),\quad B:=-t/(t^2 - t - 1),\quad Q:=t/(1-t^2),
\end{equation}
we compute that the orbit of $0$ under $\phi$ is the 4-cycle $0\mapsto 1\mapsto t\mapsto B$, and the non-periodic preimages of points in the cycle are $\infty,A,Q$, which map, respectively, to $0,B,t$. The claim follows, as illustrated in Figure \ref{I1_figure}. A further calculation shows that reduction modulo $(t-c)$ preserves the structure of this copy of I1, so that the portrait $G(\phi_c,\Q)$ also contains I1: indeed, $0$ is 4-periodic for $\phi_c$ by Proposition \ref{parametrization4}, and one can easily verify that the points $A(c)$ and $Q(c)$ do not belong to the orbit of $0$.

\begin{figure}[b]
    \centering
    \includegraphics[scale=0.19]{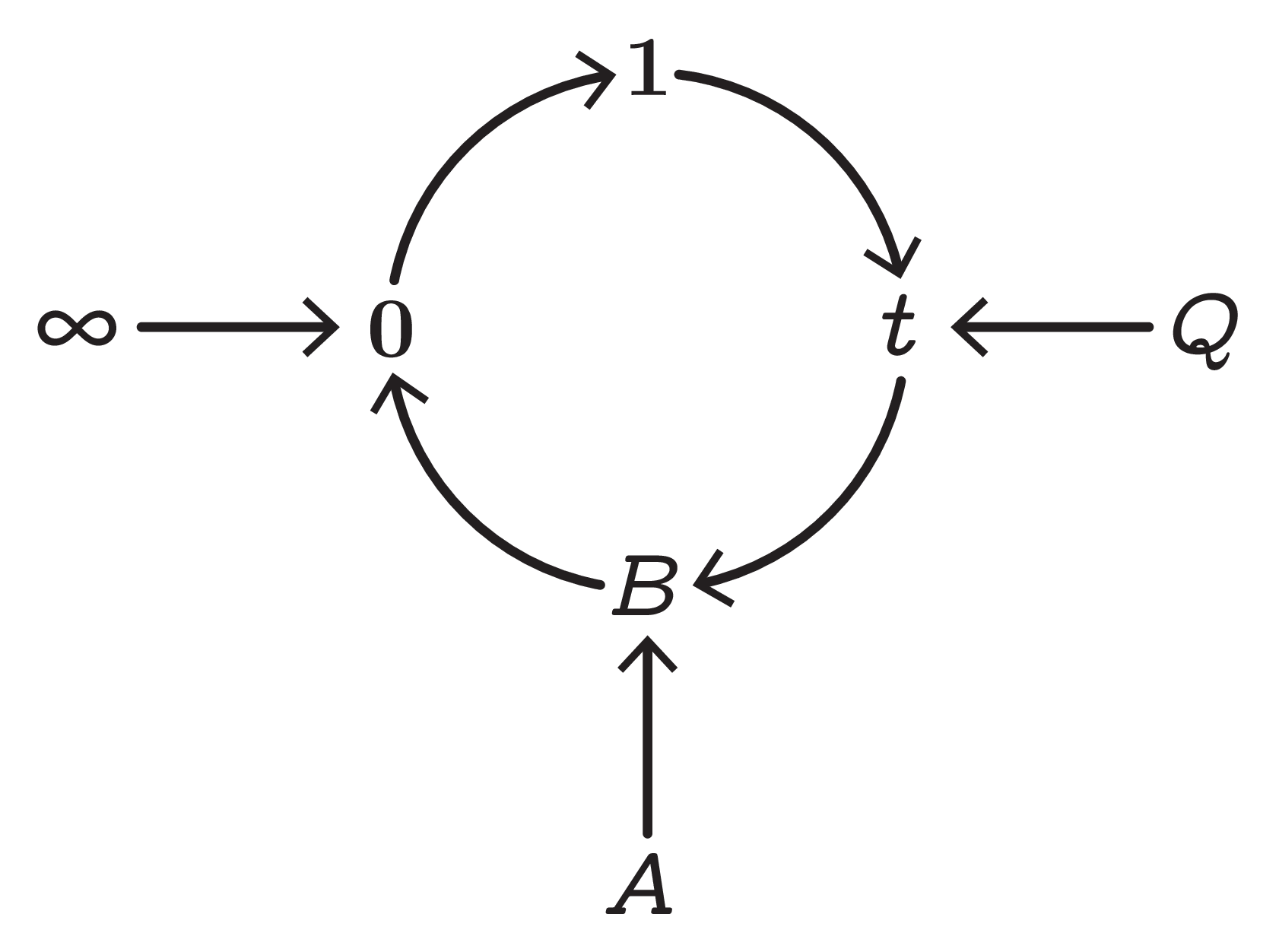}
    \caption{Portrait I1 within $G(\phi,\Q(t))$, where $\phi$ is given by \eqref{period4_functionfield}.}
    \label{I1_figure}
\end{figure}

A computation in \textsc{Magma} shows that the curve $Y_1(2,2)$ has genus 6, $Y(2,\infty)$ has genus $5$, and the fiber product
\[F:=Y_1(2)\times_{\A^1}Y(1,\infty)\]
has genus $5$. A search for rational points suggests that
\begin{equation}\label{diophantine}\tag{$\ast$}
  \#Y_1(2,2)(\Q)=5,\;\#Y(2,\infty)=3,\;\text{and }\#F(\Q)=9,
\end{equation}
though we are unable to prove these equalities. All three of the above curves are geometrically irreducible and not geometrically hyperelliptic; moreover, attempts to find nontrivial quotients of these curves failed due to limitations of computing time and/or memory. The natural maps $F\to Y(1,\infty)$ and $Y(2,\infty)\to Y(1,\infty)$ are not useful for finding all rational points of their domains, as the curve $Y(1,\infty)$ is birational to an elliptic curve of rank $1$.

As noted above, every portrait $G(\phi_c,\Q)$ contains a copy of I1. The following proposition will be used to deduce additional information about the possible shapes of these portraits.

\begin{prop}\label{preimage_curves} For $c\in\Q\setminus\{0,\pm 1\}$, the map $\phi_c$ satisfies the following:
\begin{enumerate}[(a)]
    \item \label{no_fixed_period3} $\phi_c$ has a rational fixed point if and only if $c=1/6$. Moreover, $\phi_c$ has no rational $3$-periodic point.
    \item \label{one_2cycle} If $\phi_c$ has a rational $2$-periodic point, then it has exactly two such points.
    \item \label{one_fourcycle} $\phi_c$ has exactly one $4$-cycle consisting of rational points.
    \item\label{AQ1_preimage} With notation as in \eqref{ABQ_def}, the points $A(c)$ and $Q(c)$ (which map into the critical $4$-cycle of $\phi_c$) do not have rational preimages under $\phi_c$. Moreover, $1$ has no rational preimage other than $0$.
    \item \label{2234} Assuming \eqref{diophantine}, $\phi_c$ does not have a rational point of pre-periodic type $(2,2)$; moreover, $\phi_c$ has both a rational $2$-periodic point and a pre-periodic rational point of type $(2,4)$ if and only if $c=5/2$.
\end{enumerate} 
\end{prop}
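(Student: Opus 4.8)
The plan is to prove each part by translating the relevant dynamical property of the specialization $\phi_c$ into the existence of a rational point, lying over an admissible value $c\in\Q\setminus\{0,\pm1\}$, on one of the dynamical modular curves attached to the function-field map $\phi$ of \eqref{period4_functionfield}. Throughout I write $\Phi_n=\Phi_{n,\phi}$ and use freely that, by Proposition \ref{parametrization4} and \eqref{ABQ_def}, the orbit of $0$ is the rational $4$-cycle $0\mapsto1\mapsto t\mapsto B$ with tails $\infty,A,Q$ mapping to $0,B,t$, a structure preserved under specialization at $c\notin\calB(\phi)$. Part (b) is then immediate from a degree count: since $\deg\Phi_2=2^2-2=2$, the map $\phi_c$ has exactly two $2$-periodic points and they necessarily form a single $2$-cycle, so if one is rational its image under $\phi_c$ is the other, forcing exactly two; the finitely many $c$ at which the discriminant $\Delta_2\in\Q(t)$ vanishes are checked separately to guarantee the two points are distinct.

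For the periodic-point statements I would work with dynatomic and trace curves. A rational fixed point of $\phi_c$ is a rational root of the cubic $\Phi_1(c,x)$, i.e.\ a rational point of $Y_1(1)$ over $c$; computing $Y_1(1)$ in \textsc{Magma}, determining its genus, and enumerating its rational points (by Chabauty in higher genus, or via the Mordell--Weil group if it is elliptic) should show $c=1/6$ is the only admissible value. To rule out rational $3$-periodic points, and to prove part (c), I would reduce to trace curves to keep the genus small: $T_{3,\phi}$ has degree $6/3=2$ and $T_{4,\phi}$ degree $12/4=3$ in $x$ (computed via Lemma \ref{trace_lem}), so $Y_\tau(3)$ and $Y_\tau(4)$ are far more tractable than $Y_1(3)$ and $Y_1(4)$. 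For part (c) I would factor the linear terms $x,\,x-1,\,x-t,\,x-B$ of the critical cycle out of $\Phi_4$ (of degree $2^4-2^2=12$), leaving a degree-$8$ factor whose rational points over admissible $c$ would record a second rational $4$-cycle; passing to $Y_\tau(4)$, on which the critical cycle contributes one known rational point, a Chabauty computation should show there are no further rational $4$-cycles. The same trace-curve plus Chabauty approach applied to $Y_\tau(3)$ should show no admissible $c$ carries a rational $3$-cycle.

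Part (d) splits into an elementary case and two preimage-curve computations. Solving $\phi_c(x)=1$ reduces to $p(c,x)-q(c,x)=0$, which collapses to $-(c-1)^2(c+1)x^2=0$; hence $0$ is the unique, ramified preimage of $1$, as $0$ is the critical point. The claims about $A(c)$ and $Q(c)$ amount to the preimage curves $Y(1,A)$ and $Y(1,Q)$ having no rational points over admissible $c$. Each is a double cover of the $c$-line, so I would compute its genus and then either exhibit a local obstruction (should it be a pointless conic) or determine its rational points by Chabauty or Mordell--Weil, verifying in each case that no admissible $c$ acquires a rational preimage of $A(c)$ or $Q(c)$.

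Part (e), stated conditionally on \eqref{diophantine}, is where the hard Diophantine input enters and where part (d) pays off. Rational points of type $(2,2)$ correspond to rational points of $Y_1(2,2)$; granting $\#Y_1(2,2)(\Q)=5$, I would list the five points and check each is degenerate or lies over $c\in\{0,\pm1\}$, ruling out a genuine type-$(2,2)$ point. For the second claim, part (d) forces any rational type-$(2,4)$ point to be a rational preimage of $\infty$ (the tails $A,Q$ admitting no rational preimages and $1$ no extra preimage), hence a rational point of $Y(1,\infty)$; imposing in addition a rational $2$-periodic point places $c$ under a rational point of $F=Y_1(2)\times_{\A^1}Y(1,\infty)$, and granting $\#F(\Q)=9$ I would enumerate, discard the degenerate points, and isolate $c=5/2$. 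The principal obstacle is thus twofold: for the unconditional parts it is keeping the Chabauty computations feasible, which is precisely why the reductions of $Y_1(3)$ and $Y_1(4)$ to their trace curves are essential; and for part (e) it is genuinely unresolved, since the counts in \eqref{diophantine} involve geometrically irreducible, non-hyperelliptic curves of genus $5$ and $6$ with no exploitable quotients, so they remain conjectural and force the conditional phrasing.
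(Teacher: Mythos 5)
Your proposal follows essentially the same route as the paper's proof: each part is reduced to rational points on the same modular curves ($Y_1(1)$ for fixed points, the trace curves $Y_\tau(3)$ and $Y_\tau(4)$ for periods $3$ and $4$, the preimage curves $\phi=A$, $\phi=Q$, $\phi=1$ for part (d), and $Y_1(2,2)$ together with the fiber product $F$ for part (e)), with the same deduction that a rational type-$(2,4)$ point must be a preimage of $\infty$ via parts (c) and (d). The only differences are cosmetic details of the computations (e.g.\ the paper finds $Y_\tau(3)$ birational to a rank-$0$ elliptic curve rather than needing Chabauty, and the excess component of $Y_1(4)$ has genus $24$ mapping to a genus-$2$ component of $Y_\tau(4)$ of Jacobian rank $1$), which your plan correctly anticipates.
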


\begin{proof}
The curve $Y_1(1)$ has genus 2 and its Jacobian has Mordell--Weil rank 1; the \textsc{Magma} function \texttt{Chabauty} can thus determine its set of rational points. We find that every rational point on $Y_1(1)$ has $c$-coordinate in the set $\{0,1/6,\pm 1\}$. Thus, $\phi_c$ has a rational fixed point if and only if $c=1/6$.

The curve $Y_{\tau}(3)$ is birational to an elliptic curve of Mordell--Weil rank 0, so its rational point set, and hence that of $Y_1(3)$, is finite and computable. We find that every rational point on $Y_1(3)$ has $c\in\{0,\pm 1\}$,  proving \eqref{no_fixed_period3}. Part \eqref{one_2cycle} follows from the fact that the second dynatomic polynomial of $\phi_c$ is a quadratic polynomial with nonzero discriminant. 

The curve $Y_1(4)$ has four rational irreducible components (corresponding to the points in the critical 4-cycle of $\phi$) and one additional component of genus 24, denoted by $U$. If $\phi_c$ has a rational 4-cycle distinct from the cycle contained in the copy of I1, then $c$ is the projection of a rational point on $U$. Thus, it suffices to show that every rational point on $U$ has $c$-coordinate in the set $\{0,\pm 1\}$. The curve $Y_{\tau}(4)$ has one rational component (corresponding to the trace of the critical 4-cycle of $\phi$), and one component $V$ of genus 2; moreover, the natural map $Y_1(4)\to Y_{\tau}(4)$ restricts to a map $U\to V$. The Jacobian of $V$ has rank 1, so the sets $V(\Q)$ and hence $U(\Q)$ can be computed. We find that all rational points on $U$ have $c\in\{0,\pm 1\}$, and \eqref{one_fourcycle} follows. 

The curve $\phi=A$ has genus 2 and its Jacobian has rank 1; we compute its rational points and find that every point has $c\in\{0,\pm 1\}$. Hence, $A(c)$ does not have rational preimages under $\phi_c$. The curve $\phi=Q$ is a hyperelliptic curve of genus 3, and its Jacobian has rank at most 1. The \texttt{Chabauty} functionality of Balakrishnan and Tuitman \cite{balakrishnan-tuitman} successfully computes all rational points on this curve, and every point has $c\in\{0,\pm 1\}$. Hence, $Q(c)$ does not have rational preimages under $\phi_c$. The curve $\phi=1$ is computed to be the curve in $\A^2=\Spec\Q[t,x]$ given by $x=0$; hence, $0$ is the only rational preimage of $1$ under $\phi_c$. This proves \eqref{AQ1_preimage}.

Now assume \eqref{diophantine}. If $\phi_c$ has a rational point of type $(2,2)$, then $c$ is the projection of a rational point on $Y_1(2,2)$; this is a contradiction, as the five known points have $c$-coordinate in the set $\{0,\pm 1\}$. Similarly, suppose that $\phi_c$ has both a $2$-periodic point and a point of type $(2,4)$ defined over $\Q$. By \eqref{one_fourcycle}, a rational point of type $(2,4)$ must map into the copy of I1 after two iterations of $\phi_c$; hence, by \eqref{AQ1_preimage}, such a point must be a preimage of $\infty$. It follows that $c$ is the projection of a rational point on the fiber product $F$, and therefore $c\in\{0,\pm 1,5/2\}$. This proves \eqref{2234}.
\end{proof}

\begin{figure}[ht]
\centering
\includegraphics[scale=0.45]{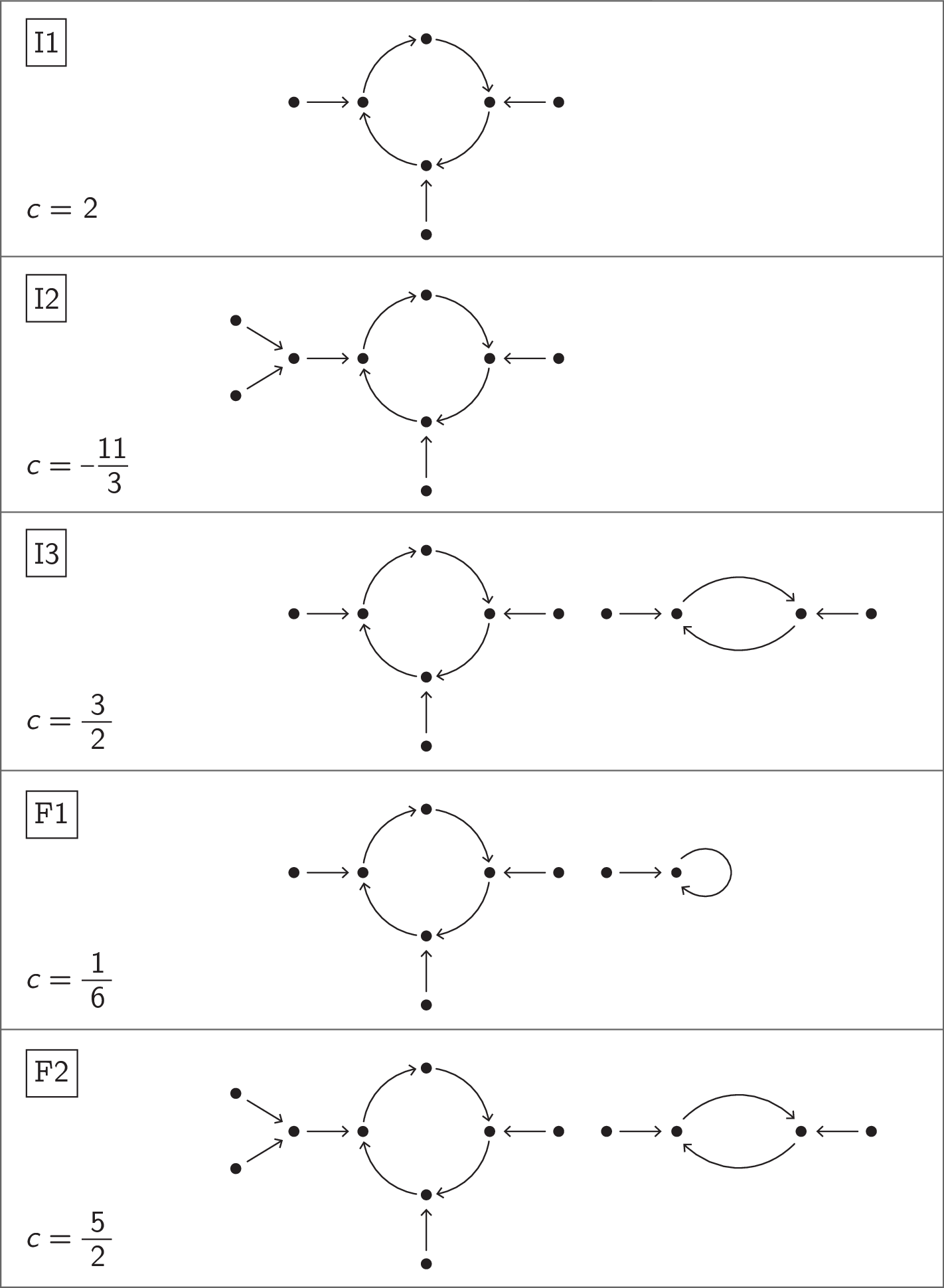}
\caption{Possibilities for the portrait $G(\phi_c,\Q)$, where $\phi$ is given by \eqref{period4_functionfield}, each with a representative parameter $c$.}
\label{period4portraits_trivial}
\end{figure}

The following proposition completes the proof of Theorem \ref{main_thm}.

\begin{prop}\label{period4_finiteness}
Let $\phi\in\Rat_2(\Q)$, and suppose that $\phi$ has a $4$-periodic critical point in $\PP^1(\Qbar)$. Assuming that $\phi$ has no rational periodic point of period greater than $4$, and assuming \eqref{diophantine}, the portrait $G(\phi,\Q)$ must be one of the five shown in Figure \ref{period4portraits_trivial}.
\end{prop}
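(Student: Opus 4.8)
The plan is to reduce immediately to the family $\phi_c$ and then account for every rational pre-periodic point by organizing it according to which cycle it eventually enters. By Proposition \ref{parametrization4}, any $\phi$ as in the statement satisfies $\langle\phi\rangle=\langle\phi_c\rangle$ for some $c\in\Q\setminus\{0,\pm1\}$, and since linear conjugacy over $\Q$ preserves the portrait we have $G(\phi,\Q)=G(\phi_c,\Q)$. Thus it suffices to show that $G(\phi_c,\Q)$ lies in the list for every such $c$. Recall from the discussion preceding Proposition \ref{preimage_curves} (see Figure \ref{I1_figure}) that $G(\phi_c,\Q)$ always contains the copy of I1 formed by the critical $4$-cycle together with the three type-$(1,4)$ preimages $\infty,A,Q$, which map respectively to $0,B,t$.

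First I would classify the rational cycles. By hypothesis there is no rational periodic point of period exceeding $4$; Proposition \ref{preimage_curves}\eqref{no_fixed_period3} rules out period $3$ and shows a rational fixed point exists precisely when $c=1/6$; part \eqref{one_fourcycle} shows the critical $4$-cycle is the unique rational $4$-cycle; and part \eqref{one_2cycle} shows that a rational $2$-periodic point, if present, lies in a single $2$-cycle. Hence the rational periodic points consist of the critical $4$-cycle, together with an optional $2$-cycle and an optional fixed point, the latter only when $c=1/6$. The two exceptional parameters are then disposed of directly: for $c\in\{1/6,5/2\}$ one computes $G(\phi_c,\Q)$ with the algorithm of Hutz and verifies that the results are F1 and F2.

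Next I would determine the strictly pre-periodic points for the remaining $c\notin\{0,\pm1,1/6,5/2\}$ by tracing preimages off each cycle. Off the $4$-cycle, the type-$(1,4)$ points are exactly $\infty,A,Q$, since by Proposition \ref{preimage_curves}\eqref{AQ1_preimage} the point $1$ has no preimage besides $0$; the type-$(2,4)$ points are precisely the finite poles of $\phi_c$ (the preimages of $\infty$), since the same part shows $A$ and $Q$ have no rational preimages; and there are no type-$(3,4)$ points, because these would be rational points of $Y(2,\infty)$, all of which have $c$-coordinate in $\{0,\pm1\}$ by \eqref{diophantine}. Off a $2$-cycle $\{u,v\}$, each point automatically acquires exactly one additional rational preimage, namely the second root of the quadratic $\phi_c(x)=u$ (resp.\ $=v$), which is rational because one root, $v$ (resp.\ $u$), already is; meanwhile type-$(2,2)$ points are excluded by Proposition \ref{preimage_curves}\eqref{2234}. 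Finally, that same part shows that a rational $2$-cycle and a type-$(2,4)$ point cannot coexist unless $c=5/2$, so for the parameters under consideration at most one of \{rational poles, rational $2$-cycle\} occurs. The three resulting configurations --- neither, poles only, $2$-cycle only --- yield exactly I1, I2, and I3, completing the list.

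The main obstacle is not any single deduction but the bookkeeping required to guarantee completeness: one must check that every rational pre-periodic point has been assigned to one of finitely many tails and that no deeper tail can appear, which is exactly what forces the appeals to the (assumed) rational-point counts on $Y_1(2,2)$, $Y(2,\infty)$, and $F=Y_1(2)\times_{\A^1}Y(1,\infty)$ at the moments they are needed. In particular, ruling out type-$(3,4)$ points via $Y(2,\infty)$ is essential for the I2 tail to terminate at the poles, and the fiber-product count is what pins the simultaneous occurrence of the two features to $c=5/2$; the genuine difficulty --- computing rational points on these genus $5$ and $6$ curves --- has been deliberately absorbed into the standing hypothesis \eqref{diophantine}.
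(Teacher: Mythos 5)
Your proof is correct and follows essentially the same route as the paper's: reduce to $\phi_c$ via Proposition \ref{parametrization4}, dispose of $c\in\{1/6,5/2\}$ by direct computation, and then combine parts (a)--(e) of Proposition \ref{preimage_curves} with the point counts in \eqref{diophantine} to pin down the possible cycles and their rational tails. The only difference is organizational --- you classify points by pre-periodic type where the paper argues component by component --- with the appeals to $Y_1(2,2)$, $Y(2,\infty)$, and the fiber product $F$ occurring at exactly the same junctures.
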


\begin{proof}
By Proposition \ref{parametrization4}, we have $\langle\phi\rangle=\langle\phi_c\rangle$ for some $c\in\Q\setminus\{0,\pm 1\}$. Letting $G=G(\phi_c,\Q)$, we must show that $G$ is one of the portraits in Figure \ref{period4portraits_trivial}. The desired result holds true if $c\in\{1/6,5/2\}$, as $G$ will then be either F1 or F2. Thus, we assume henceforth that $c\notin\{1/6,5/2\}$. 

As noted earlier, $G$ contains the portrait I1. Furthermore, the hypotheses of the proposition imply that every cycle in $G$ has length at most 4, and Proposition \ref{preimage_curves} implies that $G$ contains neither a 1-cycle nor a 3-cycle, and contains exactly one 4-cycle;  moreover, it contains either no 2-cycle or exactly one 2-cycle. It follows that $G$ has at most two connected components, namely the component containing I1, and possibly one component containing a 2-cycle; the latter component would necessarily consist of the 2-cycle together with the preimages of points in the cycle, by Proposition \ref{preimage_curves}\eqref{2234}.

Let $\Gamma$ be the component of $G$ containing I1. We will show that $\Gamma$ must be either I1 or I2, thus completing the proof of the proposition. Assuming that $\Gamma$ properly contains I1, Proposition \ref{preimage_curves}\eqref{AQ1_preimage} implies that every point in $\Gamma\setminus$ I1 must eventually map to $\infty$. Moreover, \eqref{diophantine} implies that no point in $\Gamma$ can map to $\infty$ after two iterations, as every rational point on the curve $Y(2,\infty)$ has $c\in\{0,\pm 1\}$. It follows that $\Gamma\setminus$ I1 agrees with the preimage of $\infty$ under $\phi_c$, and thus $\Gamma=$ I2, as claimed.
\end{proof}

\begin{rem}\label{period4_rem}
A search for rational points of small height on the curves $Y_{\tau}(n)$ for $n\in\{5,6,7\}$ produced only points with $c\in\{0,\pm 1\}$. Together with the computation carried out in Proposition \ref{portrait_search}, this suggests that maps $\phi$ as in Proposition \ref{period4_finiteness} cannot have rational points of period greater than $4$.
\end{rem}

\subsection{Infinitely occuring portraits} We end this article by proving Theorem \ref{infinitely_represented}, which addresses the question of which of the five portraits in Figure \ref{period4portraits_trivial} arise in the form $G(\phi,\Q)$ for infinitely many distinct arithmetical dynamical systems $\langle\phi\rangle\in\langle C_4\rangle$. Equivalently, by Proposition \ref{parametrization4}, we wish to determine which of these portraits occur as $G(\phi_c,\Q)$ for infinitely many distinct $c\in\Q$.

\begin{prop}\label{infinitude}
With notation as in Figure \ref{period4portraits_trivial}, the following hold.
\begin{enumerate}[(a)]
    \item\label{F1F2_finite} There exist only finitely many $c\in\Q$ such that $G(\phi_c,\Q)=$ {\rm F1} or {\rm F2}.
    \item\label{I123_infinite} Assume \eqref{diophantine}, and assume that no portrait $G(\phi_c,\Q)$ contains a cycle of length greater than $4$. Then, for each portrait $I\in\{\mathrm{I1, I2, I3}\}$, there exist infinitely many $c\in\Q$ such that $G(\phi_c,\Q)=I$.
\end{enumerate}
\end{prop}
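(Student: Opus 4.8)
The plan is to read off each of the five portraits as a combination of three elementary dynamical features and then to count how often each feature occurs. Write (i), (ii), (iii) for the conditions that $\phi_c$ has, respectively, a rational fixed point, a rational $2$-cycle, and a rational preimage of $\infty$. Since $\infty$ has exactly two preimages (the poles of $\phi_c$), which are the roots of a quadratic over $\Q$, condition (iii) holds as soon as one preimage is rational; and by Proposition \ref{preimage_curves}\eqref{AQ1_preimage} the only rational points of pre-periodic type $(2,4)$ are preimages of $\infty$. Combining this with the proof of Proposition \ref{period4_finiteness}, one checks that, under the standing assumptions, $G(\phi_c,\Q)$ is F1 iff (i) holds; F2 iff (ii) and (iii) both hold; I3 iff (ii) holds and (iii) fails; I2 iff (iii) holds and (ii) fails; and I1 iff neither (ii) nor (iii) holds (with (i) failing in the last three cases).

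For part \eqref{F1F2_finite}, a portrait equal to F1 forces condition (i), which by Proposition \ref{preimage_curves}\eqref{no_fixed_period3} holds only for $c=1/6$; and a portrait equal to F2 forces both a rational $2$-periodic point and a rational point of type $(2,4)$, which by Proposition \ref{preimage_curves}\eqref{2234} occurs only for $c=5/2$. Thus F1 and F2 each arise for at most one value of $c$.

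For part \eqref{I123_infinite}, I first produce the two infinite families directly. Condition (iii) is governed by the preimage curve $Y(1,\infty)$, which by the discussion in \textsection\ref{main_thm_section} is birational to an elliptic curve of Mordell--Weil rank $1$; hence $Y(1,\infty)(\Q)$ is infinite, and since its $c$-map is nonconstant with finite fibers, its image is an infinite set of parameters $c$ satisfying (iii). Discarding the finitely many values $c\in\{0,\pm1,1/6,5/2\}$, Proposition \ref{preimage_curves}\eqref{2234} shows each remaining such $c$ fails (ii), and it fails (i), so $G(\phi_c,\Q)=$ I2. For I3 one argues symmetrically using the dynatomic curve $Y_1(2)$: the second dynatomic polynomial is quadratic and irreducible over $\Q(t)$ (were it reducible, every $\phi_c$ would carry a rational $2$-cycle, contradicting the presence of I1 portraits in Proposition \ref{portrait_search}), so $Y_1(2)$ is geometrically irreducible with a degree-$2$ $c$-map, and the parameter $c=5/2$ exhibits a rational point. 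A genus computation should then show $Y_1(2)$ is rational, whence $Y_1(2)(\Q)$ is infinite; excluding again $c\in\{0,\pm1,1/6,5/2\}$, Proposition \ref{preimage_curves}\eqref{2234} forces (iii) to fail while (ii) holds and (i) fails, giving $G(\phi_c,\Q)=$ I3.

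Finally, I obtain I1 by a density estimate. The number of $c\in\Q$ with $H(c)\le H$ grows like a positive constant times $H^2$, whereas every $c$ with $G(\phi_c,\Q)\ne$ I1 satisfies (i), (ii), or (iii), hence lies in $\{1/6\}$, in the image of $Y_1(2)(\Q)$, or in the image of $Y(1,\infty)(\Q)$. The rank-$1$ elliptic curve $Y(1,\infty)$ has only polylogarithmically many rational points of bounded height, contributing $o(H^2)$ parameters $c$; and because the $c$-map $Y_1(2)\to\A^1$ has degree $2$, the parameters arising from $Y_1(2)(\Q)$ with $H(c)\le H$ number $O(H)$ when $Y_1(2)$ is rational (and fewer otherwise). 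Thus the non-I1 parameters have density zero, so infinitely many $c$ give the portrait I1. The main obstacle is the determination of $Y_1(2)$: both the infinitude of I3 and the density bound for I1 depend on knowing that $Y_1(2)(\Q)$ is infinite yet sparse, which requires computing the genus of $Y_1(2)$ and, should it turn out to be $1$, the rank of its Jacobian; the remaining steps are bookkeeping once Propositions \ref{preimage_curves} and \ref{period4_finiteness} and the rank of $Y(1,\infty)$ are in hand.
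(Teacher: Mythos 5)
Your decomposition of the five portraits according to the presence of a rational fixed point, a rational $2$-cycle, and a rational preimage of $\infty$ is exactly the classification the paper extracts from Propositions \ref{preimage_curves} and \ref{period4_finiteness}, and your treatment of I2 matches the paper's. However, there is a genuine gap at I3: your argument needs the set of parameters $c$ for which $\phi_c$ has a rational $2$-periodic point to be infinite, i.e.\ it needs $Y_1(2)(\Q)$ to be infinite, and you explicitly leave this open (``the main obstacle''), guessing that $Y_1(2)$ should turn out to be rational. That guess is wrong: the paper's computation shows that $Y_1(2)$, like $Y(1,\infty)$, is birational to an elliptic curve of Mordell--Weil rank $1$. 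So as written your proposal does not prove the I3 case; it reduces it to a genus/rank computation you did not carry out. Once that computation is supplied, your part \eqref{I123_infinite} goes through, and your height-counting argument for I1 is a legitimate alternative to the paper's: the paper instead notes that the images $A$, $B$ of $Y(1,\infty)(\Q)$ and $Y_1(2)(\Q)$ under the $c$-map are thin subsets of $\A^1(\Q)$ in the sense of Serre, that $A\cap B$ is finite because the genus-$5$ fiber product $F$ has finitely many rational points, and hence that $\Q\setminus(A\cup B)$, $A\setminus B$, $B\setminus A$ are all infinite. Your density estimate buys the same conclusion by explicit counting and has the mild advantage of not caring what the genus of $Y_1(2)$ is (only I3, not I1, needs $Y_1(2)(\Q)$ infinite); the thin-set argument is shorter and is the one the paper uses.

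There is a second, smaller defect in part \eqref{F1F2_finite}. The statement of \eqref{F1F2_finite} is unconditional, but your proof of the F2 half invokes Proposition \ref{preimage_curves}\eqref{2234}, which assumes \eqref{diophantine}; so you have only proved \eqref{F1F2_finite} conditionally. The paper avoids this: if $G(\phi_c,\Q)=\mathrm{F2}$, then $\phi_c$ has both a rational $2$-periodic point and a rational point of type $(2,4)$, so $c$ is the projection of a rational point on the fiber product $F=Y_1(2)\times_{\A^1}Y(1,\infty)$, and since $F$ has genus $5$ it has only finitely many rational points by Faltings' theorem --- no appeal to \eqref{diophantine} is needed. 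Your F1 half is fine, since Proposition \ref{preimage_curves}\eqref{no_fixed_period3} is unconditional. The fix for F2 is routine given the paper's setup, but as submitted the proof establishes a strictly weaker statement than \eqref{F1F2_finite}.
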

\begin{proof}
    It follows from Proposition \ref{preimage_curves}\eqref{no_fixed_period3} that F1 occurs as $G(\phi_c,\Q)$ for the unique value $c=1/6$. Moreover, as noted in the proof of Proposition \ref{preimage_curves}\eqref{2234}, if $\phi_c$ has both a $2$-periodic point and a point of type $(2,4)$ defined over $\Q$, then $c$ is the projection of a rational point on the fiber product $F$. This curve has genus 5, so its set of rational points is finite; hence $G(\phi_c,\Q)$ occurs as F2 for only finitely many $c$, proving \eqref{F1F2_finite}.
    
    The curves $Y(1,\infty)$ and $Y_1(2)$ have infinitely many rational points, as both are birational to elliptic curves of Mordell--Weil rank 1. Let $A$ and $B$, respectively, denote the projections (via the $c$-map) of the sets of rational points on these two curves. Then $A$ and $B$ are infinite thin subsets of $\A^1(\Q)$ in the sense of Serre \cite{serre_topics}*{Chapter 3}. Moreover, since the curve $F$ has only finitely many rational points, the set $A\cap B$ is finite. It follows that the sets $\Q\setminus (A\cup B)$, $A\setminus B$, and $B\setminus A$ are all infinite.
    
    In what follows we exclude $c\in\{0,\pm 1,1/6\}$. If $c\in\Q\setminus (A\cup B)$, Proposition \ref{period4_finiteness} implies that $G(\phi_c,\Q)=$ I1, as $c$ is not the projection of a rational point on either $Y(1,\infty)$ or $Y_1(2)$. Similarly, if $c\in A\setminus B$, then $G(\phi_c,\Q)=$ I2, and if $c\in B\setminus A$, then $G(\phi_c,\Q)=$ I3. This completes the proof.
\end{proof}

\appendix
\section{Proof of Lemma \ref{period2_realizations}}\label{realization_appendix}
The following lemma was used in Section \ref{period2_3_section} to find instances of the portraits in Figure \ref{period2portraits}. Though the full statement of the lemma was not required, we include this result as it may be of interest to the reader wishing to characterize the parameters $c$ that give rise to each portrait.

\begin{lem}
With notation as in Proposition \ref{parametrization2}\eqref{psi_v_critical}, the following hold.
\begin{enumerate}[(a)]
    \item\label{a_rational} If $c=(q^3+1)^2/q^3$ for some $q\in\Q\setminus\{0,\pm 1\}$, then $\alpha$ is rational and $c\alpha-1$ has a rational cube root. The converse holds as well.
    \item\label{a_irrational} If $\alpha$ is irrational and $c\alpha-1$ has a cube root in $\Q(\alpha)$, then there exists $p\in\Q\setminus\{0,\pm 1,1/2,1/3\}$ such that $c=(4p^3-3p+1)/p^3$. The converse holds if and only if $(p+1)(1-3p)$ is not a rational square.
\end{enumerate}
\end{lem}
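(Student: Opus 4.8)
The plan is to reduce both parts to a single identity for the quantity $c\alpha-1$. A computation of $\psi_c'$ shows that the critical points of $\psi_c$ are the two roots of $cx^2-cx+1$; since these roots are $\alpha$ and $1-\alpha$ by Proposition \ref{parametrization2}\eqref{psi_v_critical}, Vieta's formulas give $\alpha(1-\alpha)=1/c$, whence $c=1/(\alpha(1-\alpha))$. Substituting this into $c\alpha-1$ yields the key identity
\[
c\alpha-1=\frac{\alpha}{1-\alpha},
\]
valid whether or not $\alpha$ is rational. Every subsequent step analyzes when $\alpha/(1-\alpha)$ is a cube in the relevant field.

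For part (a) I would argue directly. If $\alpha\in\Q$, then $c\alpha-1$ lies in $\Q$ and has a rational cube root precisely when $\alpha/(1-\alpha)=q^3$ for some $q\in\Q$. Solving gives $\alpha=q^3/(1+q^3)$, hence $1-\alpha=1/(1+q^3)$ and $c=1/(\alpha(1-\alpha))=(1+q^3)^2/q^3$; conversely this value of $c$ returns $\alpha=q^3/(1+q^3)\in\Q$ and $c\alpha-1=q^3$. The excluded values $q\in\{0,\pm1\}$ are exactly those for which the formula degenerates: $q=0$ forces $\alpha=0$ (so $c$ is undefined), $q=-1$ makes $1+q^3=0$, and $q=1$ gives $\alpha=1/2$, hence $c=4$, which is barred from the domain of $\psi_c$.

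For part (b) the setup is that $\alpha$ is irrational, so $K:=\Q(\alpha)$ is the quadratic field $\Q(\sqrt d)$ with $d=c(c-4)$ (the critical points being Galois-conjugate roots of $x^2-x+1/c$). Writing $\sqrt d=c(2\alpha-1)$, I would compute $\beta:=c\alpha-1=\tfrac12\bigl((c-2)+\sqrt d\bigr)$ and observe that $N_{K/\Q}(\beta)=1$. If $\beta=\omega^3$ with $\omega\in K$, then $N_{K/\Q}(\omega)^3=1$ forces $N_{K/\Q}(\omega)=1$; writing $\omega=u+v\sqrt d$ with $u,v\in\Q$, the norm condition gives $v^2d=u^2-1$, and substituting this into the rational part of $\omega^3=\beta$ collapses it to the Chebyshev-type identity $4u^3-3u=(c-2)/2$, while the $\sqrt d$-part gives $v(4u^2-1)=\tfrac12$. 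Hence $c=2(4u^3-3u+1)=2(u+1)(2u-1)^2$, and the substitution $p=1/(2u+1)$ converts this into $c=(4p^3-3p+1)/p^3$. I would check that the excluded parameters $p\in\{0,-1,1/2,1/3\}$ correspond to $c$ undefined or to the forbidden values $c\in\{0,4\}$; the value $p=1$ (i.e. $u=0$, $c=2$) is a genuine boundary case that I would flag explicitly.

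The converse and the square criterion are where the real content lies, and this is the step I expect to require the most care. Given $c=(4p^3-3p+1)/p^3$ with $p$ admissible, I would first verify that $\omega=u+v\sqrt d$ with $u=(1-p)/(2p)$ and $v=1/(2(4u^2-1))$ actually satisfies $\omega^3=\beta$: a direct computation shows $d=c(c-4)=4(u^2-1)(4u^2-1)^2$, whence $v^2d=u^2-1$ holds identically and both the rational and $\sqrt d$ parts of $\omega^3=\beta$ hold automatically. Thus, once $c$ has the prescribed form, $\beta$ is \emph{always} a cube in $\Q(\sqrt d)$, and the \emph{only} obstruction to $c$ arising from a genuine part-(b) instance is the possibility that $\alpha$ is in fact rational, i.e. that $d=c(c-4)$ is a rational square. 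Since $d=4(u^2-1)(4u^2-1)^2$, this happens exactly when $u^2-1$ is a square, and the identity
\[
u^2-1=\frac{(p+1)(1-3p)}{4p^2}
\]
shows that $d$ is a square if and only if $(p+1)(1-3p)$ is a square. Therefore the converse of (b) holds precisely when $(p+1)(1-3p)$ is not a rational square, as claimed.
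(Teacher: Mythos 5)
Your proof is correct and, in the key step --- the forward direction of part (b) --- it takes a genuinely different route from the paper's. The paper writes the cube root in the basis $\{1,\alpha\}$ as $p+q\alpha$, expands $(p+q\alpha)^3=c\alpha-1$ into two polynomial relations in $c,p,q$, and eliminates $c$; this produces a factorization $(2p^2+pq+p-1)\cdot F(p,q)=0$ with $F$ a quartic factor, which the paper must then dispose of by a computational argument ($F=0$ defines a curve irreducible over $\Q$ but reducible over $\Qbar$, so its rational points are singular, and the only rational singular point has $p=-1$). You instead write the cube root as $\omega=u+v\sqrt d$ with $d=c(c-4)$ and exploit $N_{K/\Q}(c\alpha-1)=1$: taking norms gives $N_{K/\Q}(\omega)=1$, i.e.\ $v^2d=u^2-1$, and substituting this into the rational part of $\omega^3=\beta$ yields $c=2(4u^3-3u+1)$ with no extraneous factor to rule out. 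Your substitution $p=1/(2u+1)$ is exactly the change of basis relating the two parametrizations (one checks $u=p+q/2$), so the norm trick replaces the paper's elimination-plus-singularity analysis entirely; it also makes the converse transparent, since $\omega^3=\beta$ then holds identically and the only obstruction is rationality of $\alpha$. Your part (a) and your square criterion $u^2-1=(p+1)(1-3p)/(4p^2)$ match the paper's arguments up to reparametrization (the paper routes part (a) through a parametrization of the conic $c(c-4)=x^2$; your identity $c\alpha-1=\alpha/(1-\alpha)$ is more direct).

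Your decision to flag $p=1$ is not a gap in your argument but a genuine catch: the lemma as stated excludes $p=\pm1$, yet neither your proof nor the paper's rules out $p=1$, and indeed it cannot be ruled out. For $c=2$ one has $\alpha=(1+i)/2$ irrational and $c\alpha-1=i=(-i)^3$, a cube in $\Q(\alpha)=\Q(i)$, while the only rational solution of $(4p^3-3p+1)/p^3=2$ is $p=1$, since $2p^3-3p+1=(p-1)(2p^2+2p-1)$ and the quadratic factor has irrational roots. So the forward implication of (b) fails as stated at $c=2$; the correct exclusion set is $\{-1,0,1/2,1/3\}$, which is precisely what both your argument and the paper's own proof actually establish.
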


\begin{proof}
    If $c=(q^3+1)^2/q^3$ for some $q\in\Q\setminus\{0,\pm 1\}$, then $c\notin\{0,4\}$, as required in the definition of the maps $\psi_c$ from Proposition \ref{parametrization2}. Moreover, the definition of $\alpha$ yields $\alpha=q^3/(q^3 + 1)\in\Q$ and $c\alpha-1=q^3$. Conversely, suppose that $\alpha$ is rational and $c\alpha-1$ has a rational cube root. Then we have $c(c-4)=x^2$ for some $x\in\Q$. Parametrizing the conic defined by this equation, we conclude that there exists $s\in\Q\setminus\{0\}$ such that $c=(s + 1)^2/s$ and $x=(s^2 - 1)/s$, and thus $\alpha=s/(s + 1)$, $c\alpha-1=s$. It follows that $s=q^3$
 for some $q\in\Q\setminus\{0\}$, and therefore $c=(s + 1)^2/s=(q^3+1)^2/q^3$. We must have $q\ne\pm 1$ so that $c\notin\{0,4\}$. This proves \eqref{a_rational}.
 
Now suppose that $\alpha\notin\Q$ and $c\alpha-1$ has a cube root $p+q\alpha\in\Q(\alpha)$, where $p,q\in\Q$ and $q\ne 0$. The equation $(p+q\alpha)^3=c\alpha-1$ implies that
    \[0=c(p^3+1)-3pq^2-q^3=c^2-(3p^2q+3pq^2+q^3)c+q^3.\]
We cannot have $p=-1$, as the above equations would then imply that $c^2-9c+27=0$, a contradiction. Similarly, we cannot have $p=0$, as the equations would then imply $q=0$. In addition, we must have $p\notin\{1/2,1/3\}$, so that $c\notin\{0,4\}$. The first equation yields $c=(3pq^2+q^3)/(p^3+1)$; substituting this in the second equation, we obtain
\[(2p^2+pq+p-1)(p^2q^2+4p^3q+4p^4-p^2q-2p^3+3p^2+pq+p+1)=0.\]
We claim that the right-hand factor in this equation is never zero for $p\neq-1$. Assuming this claim for the moment, we have $2p^2+pq+p-1=0$, and therefore $q=(1-p-2p^2)/p$, from which it follows that
\[c=\frac{3pq^2+q^3}{p^3+1}=\frac{4p^3-3p+1}{p^3},\]
proving the first statement of \eqref{a_irrational}. To prove the claim, note that the curve defined by the right-hand factor is irreducible over $\Q$ and reducible over $\Qbar$, so every rational point on the curve must be singular. However, the only rational singular point has $p=-1$, so the claim follows.

Conversely, suppose that $c=(4p^3-3p+1)/p^3$ with $p\in\Q\setminus\{-1,0,1/2,1/3\}$ such that $(p+1)(1-3p)$ is not a rational square. The relation
    \[p^6c(c-4)=(p+1)(1-3p)(2p-1)^2\] then implies that $c(c-4)$ is not a 
    rational square, so $\alpha$ is irrational. Moreover, letting $q=(1-p-2p^2)/p$, we obtain $(p+q\alpha)^3=c\alpha-1$.
\end{proof}

\begin{bibdiv}
\begin{biblist}

\bib{balakrishnan-tuitman}{article}{
   author={Balakrishnan, Jennifer S.},
   author={Tuitman, Jan},
   title={Explicit Coleman integration for curves},
   journal={Math. Comp.},
   volume={89},
   date={2020},
   number={326},
   pages={2965--2984}
}

\bib{magma}{article}{
   author={Bosma, Wieb},
   author={Cannon, John},
   author={Playoust, Catherine},
   title={The Magma algebra system. I. The user language},
   journal={J. Symbolic Comput.},
   volume={24},
   date={1997},
   number={3-4},
   pages={235--265},
}

\bib{canci-vishkautsan}{article}{
   author={Canci, Jung Kyu},
   author={Vishkautsan, Solomon},
   title={Quadratic maps with a periodic critical point of period 2},
   journal={Int. J. Number Theory},
   volume={13},
   date={2017},
   number={6},
   pages={1393--1417},
}

\bib{code}{article}{
   author={Dunaisky, Tyler},
   author={Krumm, David},
   title={Code for the computations in the article ``Portraits of quadratic rational maps with a small critical cycle"},
   eprint={https://github.com/davidkrumm/critical_cycles},
   year={2024}
}

\bib{flynn-poonen-schaefer}{article}{
   author={Flynn, E. V.},
   author={Poonen, Bjorn},
   author={Schaefer, Edward F.},
   title={Cycles of quadratic polynomials and rational points on a genus-$2$
   curve},
   journal={Duke Math. J.},
   volume={90},
   date={1997},
   number={3},
   pages={435--463},
}

\bib{hutz}{article}{
    AUTHOR = {Hutz, Benjamin},
     TITLE = {Determination of all rational preperiodic points for morphisms
              of {PN}},
   JOURNAL = {Math. Comp.},
    VOLUME = {84},
    DATE = {2015},
    NUMBER = {291},
     PAGES = {289--308},
}

\bib{hutz_dynatomic}{article}{
   author={Hutz, Benjamin},
   title={Dynatomic cycles for morphisms of projective varieties},
   journal={New York J. Math.},
   volume={16},
   date={2010},
   pages={125--159}
}

\bib{hutz_multipliers}{article}{
   author={Hutz, Benjamin},
   title={Multipliers and invariants of endomorphisms of projective space in dimension greater than 1},
   journal={J. Th\'{e}or. Nombres Bordeaux},
   volume={32},
   date={2020},
   number={2},
   pages={439--469},
}

\bib{hutz-ingram}{article}{
   author={Hutz, Benjamin},
   author={Ingram, Patrick},
   title={On Poonen's conjecture concerning rational preperiodic points of
   quadratic maps},
   journal={Rocky Mountain J. Math.},
   volume={43},
   date={2013},
   number={1},
   pages={193--204},
}

\bib{krumm-lacy}{article}{
   author={Krumm, David},
   author={Lacy, Allan},
   title={Dynatomic Galois groups for a family of quadratic rational maps},
   journal={Int. J. Number Theory},
   note={To appear}
}

\bib{lukas-manes-yap}{article}{
   author={Lukas, David},
   author={Manes, Michelle},
   author={Yap, Diane},
   title={A census of quadratic post-critically finite rational functions
   defined over $\mathbb{Q}$},
   journal={LMS J. Comput. Math.},
   volume={17},
   date={2014},
   number={suppl. A},
   pages={314--329},
}

\bib{manes-yasufuku}{article}{
   author={Manes, Michelle},
   author={Yasufuku, Yu},
   title={Explicit descriptions of quadratic maps on $\mathbb P^1$ defined over
   a field $K$},
   journal={Acta Arith.},
   volume={148},
   date={2011},
   number={3},
   pages={257--267},
}

\bib{milnor}{article}{
   author={Milnor, John},
   title={Geometry and dynamics of quadratic rational maps},
   journal={Experiment. Math.},
   volume={2},
   date={1993},
   number={1},
   pages={37--83},
}

\bib{morton_curves}{article}{
   author={Morton, Patrick},
   title={On certain algebraic curves related to polynomial maps},
   journal={Compositio Math.},
   volume={103},
   date={1996},
   number={3},
   pages={319--350},
}

\bib{morton_period4}{article}{
   author={Morton, Patrick},
   title={Arithmetic properties of periodic points of quadratic maps. II},
   journal={Acta Arith.},
   volume={87},
   date={1998},
   number={2},
   pages={89--102},
}

\bib{morton-silverman}{article}{
   author={Morton, Patrick},
   author={Silverman, Joseph H.},
   title={Rational periodic points of rational functions},
   journal={Internat. Math. Res. Notices},
   date={1994},
   number={2},
   pages={97--110},
}

\bib{poonen}{article}{
   author={Poonen, Bjorn},
   title={The classification of rational preperiodic points of quadratic
   polynomials over ${\bf Q}$: a refined conjecture},
   journal={Math. Z.},
   volume={228},
   date={1998},
   number={1},
   pages={11--29},
}

\bib{ramadas-silversmith}{article}{
   author={Ramadas, Rohini},
   author={Silversmith, Rob},
   title = {Equations at Infinity for Critical-Orbit-Relation Families of Rational Maps},
   journal={Exp. Math.},
   volume = {0},
   date={2022},
   number = {0},
   pages={1--21},
   eprint={https://doi.org/10.1080/10586458.2022.2113575}
}

\bib{sagemath}{manual}{
      author={Developers, The~Sage},
       title={{S}ageMath, the {S}age {M}athematics {S}oftware {S}ystem
  ({V}ersion 10.2)},
        date={2023},
        note={{\tt https://www.sagemath.org}},
}

\bib{sendra-winkler-diaz}{book}{
   author={Sendra, J. Rafael},
   author={Winkler, Franz},
   author={P\'{e}rez-D\'{\i}az, Sonia},
   title={Rational algebraic curves},
   series={Algorithms and Computation in Mathematics},
   volume={22},
   publisher={Springer, Berlin},
   date={2008},
}

\bib{serre_topics}{book}{
   author={Serre, Jean-Pierre},
   title={Topics in Galois theory},
   series={Research Notes in Mathematics},
   volume={1},
   edition={2},
   publisher={A K Peters, Ltd., Wellesley, MA},
   date={2008},
}

\bib{silverman_book}{book}{
   author={Silverman, Joseph H.},
   title={The arithmetic of dynamical systems},
   series={Graduate Texts in Mathematics},
   volume={241},
   publisher={Springer, New York},
   date={2007},
}

\bib{silverman_FOD}{article}{
   author={Silverman, Joseph H.},
   title={The field of definition for dynamical systems on $\mathbf{P}^1$},
   journal={Compositio Math.},
   volume={98},
   date={1995},
   number={3},
   pages={269--304},
}

\bib{silverman_Md}{article}{
   author={Silverman, Joseph H.},
   title={The space of rational maps on $\mathbf{P}^1$},
   journal={Duke Math. J.},
   volume={94},
   date={1998},
   number={1},
   pages={41--77},
}

\bib{stoll}{article}{
   author={Stoll, Michael},
   title={Rational 6-cycles under iteration of quadratic polynomials},
   journal={LMS J. Comput. Math.},
   volume={11},
   date={2008},
   pages={367--380},
}

\bib{vishkautsan}{article}{
   author={Vishkautsan, Solomon},
   title={Quadratic rational functions with a rational periodic critical
   point of period 3},
   note={With an appendix by Michael Stoll},
   journal={J. Th\'{e}or. Nombres Bordeaux},
   volume={31},
   date={2019},
   number={1},
   pages={49--79}
}

\end{biblist}
\end{bibdiv}

\end{document}